\newtheorem{theorem}{Theorem}[section]
\newtheorem{lemma}[theorem]{Lemma}
\newtheorem{corollary}[theorem]{Corollary}
\newtheorem{proposition}[theorem]{Proposition}
\theoremstyle{definition}
\newtheorem{definition}[theorem]{Definition}
\newtheorem{example}[theorem]{Example}
\newtheorem{claim}[theorem]{Claim}
\theoremstyle{remark}
\newtheorem{remark}[theorem]{Remark}
\numberwithin{equation}{section}
\begin{document}

\title{Schauder Bases and Operator Theory}

\author{Yang Cao}
\address{Yang Cao, Department of Mathematics , Jilin university, 130012, Changchun, P.R.China} \email{Caoyang@jlu.edu.cn}

\author{Geng Tian}
\address{Geng Tian, Department of Mathematics , Jilin university, 130012, Changchun, P.R.China} \email{tiangeng09@mails.jlu.edu.cn}

\author{Bingzhe Hou}
\address{Bingzhe Hou, Department of Mathematics , Jilin university, 130012, Changchun, P.R.China} \email{houbz@jlu.edu.cn}

\date{Oct. 14, 2010}
\subjclass[2000]{Primary 47B37, 47B99; Secondary 54H20, 37B99}
\keywords{.}
\thanks{}
\begin{abstract}
In this paper, we firstly give a matrix approach to the bases of a separable Hilbert space and then correct a mistake
appearing in both review and the English translation of the Olevskii's paper. After this, we show that even a diagonal compact
operator may map an orthonormal basis into a conditional basis.
\end{abstract}
\maketitle

\section{Introduction and preliminaries}
In operator theory, an invertible
operator on an infinite dimensional complex Hilbert space $\mathcal{H}$
 means the bounded operator which has a bounded inverse operator, and it is well-known that, for an $n\times n$ matrix $M_{n}$ (seen as an operator on finite dimensional Hilbert space $\mathbb{C}^n$), $M_{n}$ is invertible if and only if its column vectors are linearly
independent in $\mathbb{C}^{n}$. In other words, the column vectors of $M_{n}$ comprise a basis of $\mathbb{C}^{n}$.
From this point of view, we could generalize the "invertibility" of $\omega \times \omega$
matrix $M$ (the representation of a bounded operator on an orthonormal basis of $\mathcal{H}$) in the following manner: all column vectors of $M$ form some kind of basis of $\mathcal{H}$.  Actually,
the invertible operator do have a natural understanding in the `basis' language. That is,
the column (or row) vectors of the matrix of an invertible operator always comprise a `Riesz basis'
(it is a direct corollary of theorem 2, paper \cite{J-M}, although the authors do not state it in this way).
From the above observation,
it suggests us to consider the $\omega \times \omega$ matrix whose column vectors form more general kind of bases.

Naturally we consider the $\omega \times \omega$ matrix whose column vectors comprise a Schauder basis. We shall call them
the \textsl{Schauder matrix} therefrom. An operator which has a Schauder matrix representation under some orthonormal basis (ONB) will be called a
{\textsl{Schauder operator}. An easy fact is that an operator is a Schauder operator if and only if it maps some ONB into a Schauder basis.
Many scholars have studied some kind of these operators. A. M. Olevskii gave a surprising result on the bounded operators which map some ONB
into a conditional quasinormal basis (\cite{Ole}, theorem 1, p479); Stephane Jaffard and Robert M. Young proved
that a Schauder basis always can be given
by an one-to-one positive transformation (\cite{J-M}, theorem 1, p554). I. Singer gave lots of examples of bases of $\mathcal{H}$
which can be rewritten into a matrix form (see, \cite{Singer}, p429, p497). Besides these results, as for
a joint research both on operator theory and the basis theory but not in this direction, the paper \cite{Gowers}, \cite{Gowers2} by
Gowers, the paper \cite{Kwapien} by
Kwapien, S. and Pelczynski, A. and the elegant book \cite{Y2} by M. Young
are remarkable examples.

Nevertheless, there is still a gap between the researches in the field of basis theory and operator theory. There are few
joint works on both basis of Hilbert space and the operators on the Hilbert space. The reason reflects on two aspects. One is the different terminology systems and the other one is
that there are scanty common objects to study with. The main purpose of this paper is to show that the Schauder matrix
is a candidate to fill this gap. As basic and traditional tools, the matrix representation of operators plays
an important role in the study of the operators on the Hilbert space $\mathcal{H}$. So the matrix  approach to the
basis theory is a good beginning to the joint research on the bases of the Hilbert space $\mathcal{H}$ and the operators on it.

In this paper, the matrix representation of operators and bases will be the bridge between basis theory and operator theory.
We firstly give a matrix approach to the bases of a separable Hilbert space and then correct a mistake
appearing in both review and the English translation of the Olevskii's paper.
After this, we follow the Olevskii's result to
consider the operators which can map some ONB into
a conditional Schauder basis. We shall call them \textsl{conditional operators} therefrom.
In matrix language, it is equivalent to study the operator $T$ which has a matrix representation $M$ under some ONB such that
the column vector sequence of $M$ comprise a conditional Schauder basis.
\section{An Operator Theory Description of Schauder basis}

\subsection{}\label{Subsec:Matrix Form of Basis}

Suppose that $\{e_{k}\}_{k=1}^{\infty}$ is an ONB of $\mathcal{H}$. An $\omega \times \omega$ matrix
$M=(m_{ij})$ automatically represents an operator under this ONB. In more details, for a vector $x \in \mathcal{H}$ there is
an unique $l^{2}-$sequence $\{x_{n}\}_{n=1}^{\infty}$ such that $x=\sum_{n=1}^{\infty}x_{n}e_{n}$ in which the
series converges in the norm of $\mathcal{H}$. Let $y_{n}=\sum_{k=1}^{\infty} m_{ik}x_{k}, y=\sum_{n=1}^{\infty}y_{n}e_{n}$,
then the operator $T_{M}$ defined by $T_{M}x=y$ is just the corresponding operator represented by $M$. In general, $T$
is not a bounded operator. We shall
identify the $\omega \times \omega$ matrix $M$ and the operator $T_{M}$, and denote them by the same notation $M$
if we have fixed an ONB and there is no confusion.

Recall that a sequence $\psi=\{f_{n}\}_{n=1}^{\infty}$ is called a \textsl{Schauder basis} of the Hilbert space $\mathcal{H}$
if and only if for every vector $x \in \mathcal{H}$ there exists an unique sequence $\{\alpha_{n}\}_{n=1}^{\infty}$ of complex numbers
such that the partial sum sequence $x_{k}=\sum_{n=1}^{k} \alpha_{n}f_{n}$ converges to $x$ in norm.

Denote by $P_{k}$ the the diagonal operator with the first $k-$th entries on diagonal line equal to 1 and $0$ for others.
Then as an operator $P_{k}$ represents the orthogonal projection
from $\mathcal{H}$ to the subspace $\mathcal{H}^{(k)}=span \{e_{1}, e_{2}, \cdots, e_{k}\}$.

\begin{lemma}\label{Matrix Form}
Assume that $\{e_{k}\}_{k=1}^{\infty}$ is a fixed ONB of $\mathcal{H}$.
Suppose that an $\omega \times \omega$ matrix $F=(f_{ij})$
satisfies the following properties:

1. Each column of the matrix $F$ is a $l^{2}-$sequence;

2. $F$ has an unique left inverse matrix $G^{*}=(g_{kl})$ such that each row of $G^{*}$ is also a $l^{2}-$sequence;

3. Operators $Q_{k}$ defined by the matrix $Q_{k}=FP_{k}G^{*}$ are well-defined projections on $\mathcal{H}$
and converges to the unit operator $I$ in the strong operator
topology.

Then the sequence $\{f_{k}\}_{k=1}^{\infty}, f_{k}=\sum_{i=1}^{\infty} f_{ik}e_{i}$ must be a Schauder basis.
\end{lemma}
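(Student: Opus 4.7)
The strategy is to read off both the biorthogonal functionals and the partial--sum projections directly from the hypotheses: the candidate coefficient functionals are the rows of $G^*$, the candidate basis elements are the columns $f_k$ of $F$, and the $k$-th partial--sum operator of the expansion is precisely $Q_k$. Conditions~1 and~2 produce the biorthogonal system, and condition~3 gives convergence; the definition of Schauder basis (existence plus uniqueness of the expansion) will then follow in a few lines.

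First I would fix, for each $k$, the coefficient functional $\alpha_k(x)=\sum_{l=1}^{\infty} g_{kl}x_l$ where $x=\sum_l x_l e_l$. Because the $k$-th row of $G^*$ is an $\ell^2$-sequence (condition~2), Cauchy--Schwarz yields $\alpha_k\in\mathcal{H}^*$. Next I would compute $Q_k x$ directly from the matrix product $FP_k G^*$. The factor $P_k$ annihilates every row of $G^*$ beyond the $k$-th, so $P_k G^* x$ is the vector with finite support whose first $k$ coordinates are $\alpha_1(x),\ldots,\alpha_k(x)$. Applying $F$ to a vector of finite support is just a finite linear combination of the columns $f_1,\ldots,f_k$, giving
\[
Q_k x \;=\; \sum_{j=1}^{k}\alpha_j(x)\,f_j.
\]
No convergence issue arises because the outer sum is finite. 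Condition~3 then states that $Q_k x\to x$ in norm, which is exactly the existence of the expansion $x=\sum_{j=1}^{\infty}\alpha_j(x)f_j$.

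For uniqueness I would exploit the relation $G^*F=I$. Written out coordinatewise this reads $\sum_l g_{kl}f_{lj}=\delta_{kj}$, i.e.\ $\alpha_k(f_j)=\delta_{kj}$, so $\{\alpha_k\}$ is biorthogonal to $\{f_j\}$. If $x=\sum_{j=1}^{\infty}\beta_j f_j$ is any other norm--convergent expansion, applying the continuous functional $\alpha_k$ term by term (legitimate once $N\ge k$) forces $\beta_k=\alpha_k(x)$. Hence the coefficients are uniquely determined, and $\{f_k\}$ is a Schauder basis.

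The one genuinely delicate point --- and what I expect to be the main obstacle --- is justifying the associative interpretation of $FP_kG^*$ as a composition of operators, since $G^*$ is not assumed to be bounded on all of $\mathcal{H}$ (only its rows are $\ell^2$). This is precisely why the projection $P_k$ is inserted in the middle: after truncation one is applying $F$ to a finite--support vector, so both the action of $G^*$ through its first $k$ rows and the subsequent action of $F$ are unambiguously defined, and the entrywise computation $(Q_k x)_m=\sum_{j=1}^{k}f_{mj}\alpha_j(x)$ is rigorous without any boundedness hypothesis on $F$ or $G^*$ individually.
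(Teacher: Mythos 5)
Your proposal is correct and follows essentially the same route as the paper: the rows of $G^{*}$ are read as bounded biorthogonal functionals $\alpha_{k}$ with $\alpha_{k}(f_{j})=\delta_{kj}$, one computes $Q_{k}x=\sum_{j=1}^{k}\alpha_{j}(x)f_{j}$, and condition~3 yields the existence of the expansion. The only cosmetic difference is in the uniqueness step, where you apply the continuous functionals $\alpha_{k}$ term by term to a convergent expansion, while the paper argues by contradiction from the minimality of $\{f_{j}\}$ --- both rest on exactly the same biorthogonality supplied by $G^{*}$.
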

Here we use the term ``left reverse'' in the classical means, that is, the series $\sum_{j=1}^{\infty}g_{kj}f_{jn}$ converges
absolutely to $\delta_{kn}$ for $k, n=1, 2, \cdots$. $G^{*}$ does not mean the adjoint of $G$, it is just a notation.
\begin{proof}
Property 1 just ensure that series $\{f_{k}=\sum_{j=1}^{\infty} f_{ij}e_{i}\}_{k=0}^{\infty}$ converges to a well-defined
vector $f_{k}$ in $\mathcal{H}$ by norm.
Property 2 implies that $span \{f_{n}; n=1, 2, \cdots\}=\mathcal{H}$ by the uniqueness of the left inverse. Moreover,
the $k-$th row of the matrix $G^{*}$ is just the vector $g_{k}^{*}$ such that
$(g_{k}^{*}, f_{n})=\delta_{kn}$. Therefore the vector sequence $\{f_{n}\}_{n=1}^{\infty}$ must be minimal
by the Hahn-Banach theorem(cf, \cite{Conway} corollary6.8, p82) and the Riesz representation theorem(see, \cite{Conway}, theorem3.4, p12).

Now for each vector $x=(x_{1}, x_{2},\cdots)$ denote by
$
\alpha_{k}^{x}= (g_{k}^{*}, x),
$
it is easy to check that $Q_{k}^{2}=Q_{k}$ and
$$
Q_{k}x=FP_{k}G^{*}x=\sum_{n=1}^{k}\alpha^{x}_{k} f_{k}.
$$
By property 3, we have
$Q_{k}x \rightarrow x$ since $Q_{k}$ converges to $I$ in strong operator topology(SOT). That is, series
$\sum_{n=1}^{\infty} \alpha^{x}_{n}f_{n}$ converges to the vector $x$ in norm. So we have proved that each
vector $x$ in $\mathcal{H}$ can be represented by the sequence $\{f_{n}\}_{n=1}^{\infty}$
with coefficients $\{\alpha_{n}^{x}\}_{n=1}^{\infty}$.

To show that
$\{f_{n}\}_{n=1}^{\infty}$ is a Schauder basis, we just need to show that this representation is unique.
Suppose that $\{\alpha_{n}\}_{n=1}^{\infty}$ is a sequence such that the series $\sum_{n=1}^{\infty} \alpha_{n}f_{n}$
converges to $0$ in the norm of the Hilbert space $\mathcal{H}$. Assume that the integer $n_{0}$ is the first number
satisfying $\alpha_{n_{0}} \ne 0$. Then we have
$$
f_{n_{0}}=-\frac{1}{\alpha_{n_{0}}} \cdot \sum_{n=n_{0}+1}^{\infty} \alpha_{n}f_{n}
$$
in which the series also converges in the norm topology. It counter to the fact that the sequence $\{f_{n}\}_{n=1}^{\infty}$ is a minimal sequence.
\end{proof}

Conversely, suppose that $\psi=\{f_{n}\}_{n=1}^{\infty}$ is a basis of $\mathcal{H}$. For a fixed ONB $\{e_{n}\}_{n=1}^{\infty}$,
each vector $f_{n}$ has a representation $f_{n}=\sum_{k=1}^{\infty} f_{kn}e_{k}$. Denote  $F_{\psi}=(f_{kn})$.
We shall call $F_{\psi}$ \textsl{the Schauder matrix corresponding to the basis $\psi$}. The following lemma is the inverse of
the above lemma.

\begin{lemma}\label{Lemma:Matrix Form 2}
Assume that $\psi=\{f_{n}\}_{n=1}^{\infty}$ is a Schauder basis. Then the corresponding Schauder matrix $F_{\psi}$ satisfies the following
properties:

1. Each column of the matrix $F_{\psi}$ is a $l^{2}-$sequence;

2. $F_{\psi}$ has an unique left inverse matrix $G_{\psi}^{*}=(g_{kl})$ such that each row of $G_{\psi}^{*}$ is also a $l^{2}-$sequence;

3. Operators $Q_{k}$ defined by the matrix $Q_{k}=F_{\psi}P_{k}G_{\psi}^{*}$ are well-defined projections on $\mathcal{H}$
and converges to the unit operator $I$ in the strong operator
topology.
\end{lemma}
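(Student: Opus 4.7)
The proof verifies properties 1--3 in turn, with the central tool being the classical fact that the coefficient functionals of a Schauder basis on a Banach space are automatically continuous.

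Property 1 is immediate: since each $f_n \in \mathcal{H}$, its Fourier coefficients $(f_{kn})_k$ with respect to the ONB $\{e_k\}$ form an $l^2$-sequence of norm $\|f_n\|$, which is exactly the $n$-th column of $F_\psi$. The real work lies in property 2; property 3 will follow almost immediately once $G_\psi^*$ is in hand.

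For property 2, I plan to build $G_\psi^*$ out of the biorthogonal system of $\psi$. Because $\psi$ is a Schauder basis, every $x \in \mathcal{H}$ has a unique expansion $x = \sum_n \alpha_n^x f_n$, and the coefficient functionals $x \mapsto \alpha_n^x$ are bounded linear functionals on $\mathcal{H}$; this is the standard Banach--Steinhaus argument applied to the partial sum operators on the complete space $\mathcal{H}$. By the Riesz representation theorem, for each $k$ there is a vector $g_k \in \mathcal{H}$ representing $\alpha_k$ via the inner product of $\mathcal{H}$. Define $G_\psi^* = (g_{kl})$ by letting the $k$-th row be the coordinate sequence of $g_k$ with respect to $\{e_l\}$ (with whatever conjugation is needed to match the bilinear pairing convention fixed in Lemma~\ref{Matrix Form}). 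Each row then lies in $l^2$ since $g_k \in \mathcal{H}$, and the biorthogonality $\alpha_k(f_n) = \delta_{kn}$ is exactly the identity $G_\psi^* F_\psi = I$ (with absolute convergence of the entries, as the rows of $G_\psi^*$ and the columns of $F_\psi$ are both $l^2$). Uniqueness among left inverses with $l^2$ rows follows by a density argument: any such row $(h_{kl})$ defines, by Cauchy--Schwarz, a bounded linear functional on $\mathcal{H}$ that agrees with $\alpha_k$ on the total set $\{f_n\}$, hence agrees with $\alpha_k$ on $\mathcal{H}$, so the row coincides with the $k$-th row of $G_\psi^*$.

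For property 3, the exact computation performed in the proof of Lemma~\ref{Matrix Form} applies verbatim and yields $Q_k x = F_\psi P_k G_\psi^* x = \sum_{n=1}^{k} \alpha_n^x f_n$, the $k$-th partial sum of the basis expansion of $x$. Idempotency $Q_k^2 = Q_k$ is forced by $G_\psi^* F_\psi = I$; uniform boundedness of the $Q_k$ is the existence of the basis constant; and $Q_k x \to x$ in norm for every $x$ is, by definition, the convergence of the basis expansion, which is precisely the required SOT convergence $Q_k \to I$. The main obstacle throughout is the justification of the continuity of the coefficient functionals --- the classical foundation of Schauder basis theory --- together with careful bookkeeping between the bilinear matrix pairing used in Lemma~\ref{Matrix Form} and the sesquilinear inner product of $\mathcal{H}$.
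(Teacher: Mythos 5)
Your proposal is correct and follows essentially the same route as the paper: construct the biorthogonal coefficient functionals, represent them via the Riesz representation theorem as the $l^{2}$ rows of $G_{\psi}^{*}$ (with uniqueness coming from completeness of $\{f_{n}\}$), and read off property 3 as the statement that $Q_{k}x$ is the $k$-th partial sum of the basis expansion of $x$, which converges in norm by definition. The only cosmetic difference is that you justify boundedness of the coefficient functionals by Banach--Steinhaus applied to the partial sum operators, whereas the paper invokes minimality of the basis (citing Singer) together with Hahn--Banach; these are interchangeable standard facts.
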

\begin{proof}
Property 1 comes from the fact that $f_{n}$ is a vector in $\mathcal{H}$.

If $\{f_{k}\}_{k=1}^{\infty}$ is a Schauder basis, then the subspace $\widehat{\mathcal{H}}_{k}=span\{f_{n}; n\ne k\}$ for each $k$
satisfying $f_{k} \notin \widehat{\mathcal{H}}_{k}$(cf, \cite{Singer}, p50-51).
So we must have a unique linear functional $\varphi_{k}$ such that
$\varphi_{k}(f_{n})=\delta_{kn}$.
Then by the Riesz representation theorem, there is a unique vector $g^{*}_{k}=(g^{*}_{kl}) \in \mathcal{H}$ such that
$\sum_{j=1}^{n}g^{*}_{kj}, f_{jn}=\delta_{kn}$ in which $\{g^{*}_{kl}\}_{l=1}^{\infty}$ is a $l^{2}-$sequence. The uniqueness holds
because the sequence $\{f_{k}\}_{k=1}^{\infty}$ spans the Hilbert space.
Hence a Schauder matrix must have a unique
left inverse matrix whose rows are $l^{2}-$sequence. Then we have proved the property 2.

Property 3 is just a direct corollary
of the definition of Schauder basis.
Denote by $G=(G^{*})^{*}=g_{nk}$ the adjoint matrix of $G^{*}$, then we have $g_{nk}=\overline{g_{kn}}$. Moreover,
denote by $g_{n}$ the $n-$th column vector
and for a vector $x=\sum_{n=1}^{\infty} x_{n}e_{n}$ denote by
$y_{n}=\sum_{k=1}^{\infty} g_{nk}^{*}x_{k}$. Then trivially we have $y_{n}=(x, g_{n})$ and $(f_{k}, g_{n})=\delta_{kn}$
Suppose that $x=\sum_{k=1}^{\infty} \alpha_{k}f_{k}$ is the representation
of the vector $x$ under the basis $\psi$.
Then we must have $\alpha_{n}=y_{n}$ since
$$
y_{n}=(x, g_{n})=(\sum_{k=1}^{\infty} \alpha_{k}f_{k}, g_{n})=\alpha_{n}.
$$
Therefore we have $Q_{k}x=\sum_{n=1}^{\infty} \alpha_{n}f_{n}$. Clearly we have $Q_{k}x \rightarrow x$ in the norm topology.
In other words, $||Q_{k}x-x|| \rightarrow 0$ when $k\rightarrow \infty$
which implies $Q_{k} \rightarrow I$ in SOT(cf, \cite{Conway}, proposition 1.3, p262).
\end{proof}

The matrix $G_{\psi}^{*}$ is unique and decided completely by $F_{\psi}$. 
In fact the matrix $G^{*}$ is also the ``right inverse'' of the matrix $F$ in the classical sense. For more details, let
$F=(f_{kn})_{\omega \times \omega}$, $G^{*}=(g_{mk})_{\omega \times \omega}$, $f_{n}=\{f_{kn}\}_{k=1}^{\infty}$ and
$g_{m}^{*}=\{g_{mk}^{*}\}_{k=1}^{\infty}$. Moreover, denote their adjoint matrices by
$F^{*}=(f^{*}_{kn})_{\omega \times \omega}=(\overline{f_{nk}})_{\omega \times \omega}$,
$G=(g^{*}_{mk})_{\omega \times \omega}=(\overline{g_{km})}_{\omega \times \omega}$.
Then both $\psi=\{f_{n}\}_{n=1}^{\infty}$ and $\psi^{*}=\{g_{m}\}_{m=1}^{\infty}$
are biorthogonal basis to each other. That is, $\psi$ and $\psi^{*}$ are bases and we have $(f_{n}, g_{m})=\delta_{nm}$ for
all $n, m \in \mathbb{N}$. Now we show that the series
$
\sum_{k=1}^{\infty} f_{nk}g^{*}_{km}
$
converges to $\delta_{nm}$ as $k \rightarrow \infty$ for all $n, m \in \mathbb{N}$. Let $\{e_{l}\}_{l=1}^{\infty}$ be the
corresponding ONB. We write $e_{n}, e_{m}$ into the linearly combinations of basis vector in $\psi$ and $\psi^{'}$ as follows:
$$
e_{n}=\sum_{k=1}^{\infty}\alpha_{nk}f_{k}, e_{m}=\sum_{k=1}^{\infty}\beta_{mk}g^{*}_{k}.
$$
Then we have $\alpha_{nk}=g^{*}_{kn}$ and $\beta_{mk}=f^{*}_{km}=\overline{f_{mk}}$. Hence for any integer $N$
$$
\begin{array}{rl}
\sum_{k=1}^{N}f_{nk}g^{*}_{km}&=(\sum_{k=1}^{N}\alpha_{nk}f_{k}, \sum_{k=1}^{N}\beta_{mk}g^{*}_{k})\\
                              &=(e_{n}-\sum_{k=N}^{\infty}\alpha_{nk}f_{k}, e_{m}-\sum_{k=N}^{\infty}\beta_{mk}g^{*}_{k}).
\end{array}
$$
Now given $\epsilon >0$, we choose an integer $N$ such that inequalities
$||e_{n}-\sum_{k=N}^{\infty}\alpha_{nk}f_{k}||<\frac{\epsilon}{2}, ||e_{m}-\sum_{k=N}^{\infty}\beta_{mk}g^{*}_{k}||<\frac{\epsilon}{2}$ hold. Then we have
$$
\begin{array}{rl}
&|\sum_{k=1}^{N}f_{nk}g^{*}_{km}-(e_{n}, e_{m})| \\
=&|-(\sum_{k=N}^{\infty}\alpha_{nk}f_{k}, e_{m}-\sum_{k=N}^{\infty}\beta_{mk}g^{*}_{k})\\
                              &~~~~-(e_{n}-\sum_{k=N}^{\infty}\alpha_{nk}f_{k}, \sum_{k=N}^{\infty}\beta_{mk}g^{*}_{k})
                               +(\sum_{k=N}^{\infty}\alpha_{nk}f_{k}, \sum_{k=N}^{\infty}\beta_{mk}g^{*}_{k})|\\
                              \le & \epsilon(|1+\frac{\epsilon}{2}|+\frac{\epsilon}{4}).
\end{array}
$$
For this reason,  we have the following definition.
\begin{definition}
For a Schauder matrix $F_{\psi}$, the corresponding matrix $G_{\psi}^{*}$ is called the \textsl{inverse matrix} of $F_{\psi}$.
\end{definition}

If we do not ask that each row of $G^{*}$ is a $l^{2}-$sequence, an $\omega \times \omega$ matrix may have a ``left inverse''
in the classical sense.

\begin{example}
Let $F$ be the matrix
$$
\begin{bmatrix}
1&1&0&0&\cdots \\
0&-1&1&0&\cdots \\
0&0&-1&1&\cdots \\
0&0&0&-1&\cdots \\
\vdots&\vdots&\vdots&\vdots&\ddots
\end{bmatrix},
$$
and $G^{*}$ be the matrix
$$
\begin{bmatrix}
1&1&1&1&\cdots \\
0&-1&-1&-1&\cdots \\
0&0&-1&-1&\cdots \\
0&0&0&-1&\cdots \\
\vdots&\vdots&\vdots&\vdots&\ddots
\end{bmatrix}.
$$
It is trivial to check $G^{*}F=FG^{*}=I$. Then by above lemma \ref{Matrix Form} we know $F$ is not a Schauder matrix
since the rows of its inverse matrix are not $l^{2}-$sequence.
Moreover, if we denote by $g_{n}$ the $n-th$ column vector, then the sequence
$\xi=\{g_{n}\}_{n=1}^{\infty}$ is a complete minimal sequence(see \cite{Singer}, p24 and p50 for definitions). It is easy to
check that $\xi$ is complete since the $l^{2}$-sequence $h_{n}=\{h_{n}(j)\}_{j=1}^{\infty}, h_{n}(j)=\delta_{nj}$ is in its range;
On the other hand, the row vector sequence $\{f_{k}\}_{k=1}^{\infty}$ satisfies $(g_{n}, f_{k})=\delta_{kn}$ which implies
$g_{n} \notin \vee_{m \ne n} g_{m}$(or in notations of singer, we have $g_{n}\notin [g_{1}, \cdots, g_{n-1}, g_{n+1}, \cdots]$)
by the fact $\vee_{m \ne n} g_{m}=\ker \varphi_{k}$ in which $\varphi_{k}(x)=(x, f_{k})$ is a bounded functional by Riesz's theorem.
Therefore $\xi$ is an example which is complete and minimal sequence but not a basis sequence.
\end{example}

By above lemma \ref{Matrix Form} and \ref{Lemma:Matrix Form 2}, we have
\begin{theorem}\label{Theorem: Schauder Matrix}
An $\omega \times \omega$ matrix $F$ is a Schauder matrix if and only if it satisfies property 1, 2 and 3.
\end{theorem}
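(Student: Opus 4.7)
The plan is straightforward: the theorem is the combination of the two lemmas established just above it, so I intend to prove it by invoking them directly rather than developing new machinery.

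For the ``only if'' direction, I would suppose that $F$ is a Schauder matrix. By the definition given in the subsection, this means there is a fixed ONB $\{e_k\}$ under which the column vectors $f_n = \sum_{i} f_{in} e_i$ form a Schauder basis $\psi$ of $\mathcal{H}$. Then by construction $F$ coincides with the Schauder matrix $F_\psi$ associated to $\psi$, and Lemma~\ref{Lemma:Matrix Form 2} applied to $\psi$ delivers properties 1, 2 and 3 at once.

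For the ``if'' direction, I would suppose that $F$ satisfies properties 1, 2 and 3. Then Lemma~\ref{Matrix Form} immediately produces a Schauder basis whose $k$-th element is $f_k = \sum_i f_{ik} e_i$, i.e.\ the $k$-th column of $F$ viewed as a vector in $\mathcal{H}$. By the definition of Schauder matrix, this is exactly what is needed to conclude that $F$ itself is a Schauder matrix.

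There is really no obstacle, since both implications have been carried out in the preceding lemmas; the only detail worth flagging in the write-up is the bookkeeping identification between the matrix $F$ appearing in Lemma~\ref{Matrix Form} and the matrix $F_\psi$ produced from the resulting basis $\psi = \{f_k\}$, which is immediate from the formula $f_k = \sum_{i} f_{ik} e_i$ that both definitions use. Thus the theorem follows by simply combining the two lemmas.
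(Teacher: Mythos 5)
Your proposal is correct and matches the paper exactly: the paper derives this theorem as an immediate consequence of Lemma~\ref{Matrix Form} (the ``if'' direction) and Lemma~\ref{Lemma:Matrix Form 2} (the ``only if'' direction), which is precisely the combination you describe.
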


For a Schauder matrix $F$, the column vector sequence $\{g_{n}\}_{n=1}^{\infty}$ of $G$ defined
in above lemmas is also a Schauder basis which is called the
\textsl{biorthogonal basis} to the basis $\{f_{k}\}_{k=1}^{\infty}$(cf \cite{Y2}, pp23-29, \cite{Singer} pp23-25).

The projection $FP_{n}G^{*}$ is just the $n-$th ``natural projection'' so called in \cite{B}(p354).
It is also the \text{$n-$th partial sum operator} so called in \cite{Singer}(definition 4.4, p25). Now we can translate
theorem 4.1.15 and corollary 4.1.17 in \cite{B} into the following
\begin{proposition}\label{BC1}
If $F$ is a Schauder matrix, then $M=\sup_{n} \{||FP_{n}G^{*}||\}$ is a finite const.
\end{proposition}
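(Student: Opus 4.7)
The plan is to deduce this bound directly from the Banach--Steinhaus uniform boundedness principle; essentially all of the substance has already been packaged into Lemma \ref{Lemma:Matrix Form 2}, so there is very little left to do.

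First I would invoke property 3 of Lemma \ref{Lemma:Matrix Form 2} (equivalently, Theorem \ref{Theorem: Schauder Matrix}) to assert that each $Q_n = F P_n G^{*}$ is a well-defined bounded projection on $\mathcal{H}$ and that the sequence $\{Q_n\}$ converges to the identity $I$ in the strong operator topology. In particular $\|Q_n x - x\| \to 0$ for every $x \in \mathcal{H}$, so each orbit $\{Q_n x\}_{n=1}^{\infty}$ is a convergent, and hence norm-bounded, sequence in $\mathcal{H}$; that is, $\sup_n \|Q_n x\| < \infty$ pointwise in $x$.

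Next, since $\mathcal{H}$ is a Banach space and each $Q_n$ is a bounded linear operator with pointwise bounded orbit, the Banach--Steinhaus theorem instantly delivers the uniform bound $M = \sup_n \|Q_n\| < \infty$, which is the claim.

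The one place that requires a moment of care is justifying that each individual $Q_n$ is actually bounded (so that uniform boundedness applies). This is already implicit in calling $Q_n$ a ``projection'' in Lemma \ref{Lemma:Matrix Form 2}, but if desired it can be made explicit: $Q_n$ has finite-dimensional range $\mathrm{span}\{f_1,\ldots,f_n\}$ and acts by the formula $Q_n x = \sum_{k=1}^{n}(g_k^{*}, x)\, f_k$ derived in the proof of Lemma \ref{Matrix Form}. Each summand is a bounded rank-one operator because $g_k^{*} \in \mathcal{H}$ by property 2, so $Q_n$ is a finite sum of bounded operators and hence bounded. There is no further obstacle; the statement really is a one-line application of uniform boundedness on top of the Schauder-matrix machinery.
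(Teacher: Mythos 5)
Your proof is correct, and it is worth noting that it is not the route the paper itself takes: the paper gives no argument at all for Proposition \ref{BC1}, but simply translates Theorem 4.1.15 and Corollary 4.1.17 of \cite{B}, whose underlying proof is the classical one (introduce the norm $|||x|||=\sup_n\|Q_nx\|$, show it is complete, and invoke the open mapping theorem to get equivalence with the original norm). That detour is forced in a general Banach space because one does not yet know that the coordinate functionals are continuous, so Banach--Steinhaus cannot be applied directly. You instead exploit the fact that in this paper's framework the continuity of the functionals $x\mapsto(g_k^{*},x)$ has already been secured in property 2 of Lemma \ref{Lemma:Matrix Form 2} (each row of $G^{*}$ is an $l^{2}$-sequence, hence a vector of $\mathcal{H}$, by Hahn--Banach and Riesz representation), so each $Q_n=FP_nG^{*}$ is a bounded finite-rank operator and the SOT convergence $Q_n\to I$ of property 3 gives pointwise boundedness; uniform boundedness then follows in one line. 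This is a legitimately different and, given the machinery already in place, more economical argument; there is no circularity, since Lemma \ref{Lemma:Matrix Form 2} is established independently of Proposition \ref{BC1}. The only caveat is that the hard analytic content has not disappeared but merely migrated: the minimality statement $f_k\notin\overline{\mathrm{span}}\{f_n:n\ne k\}$ cited from \cite{Singer} in the proof of Lemma \ref{Lemma:Matrix Form 2} is itself classically proved by the same new-norm argument that underlies \cite{B}, so your proof is shorter relative to the paper's stated lemmas rather than in absolute terms.
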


The const $M$ is called the \textsl{basis const} for the basis $\{f_{n}\}_{n=1}^{\infty}$.

Assume that $\psi=\{f_{n}\}_{n=1}^{\infty}$ is a basis.
For a subset $\Delta$ of $\mathbb{N}$, denote by $P_{\Delta}$ the diagonal matrix defined as
$P_{\Delta}(nn)=1$ for $n \in \Delta$ and $P_{\Delta}(nn)=0$ for $n \notin \Delta$.
The projection $Q_{\Delta}=F_{\psi}P_{\Delta}G_{\psi}^{*}$ defined in above lemmas
is called a \textsl{natural projection}(see, definition 4.2.24, \cite{B}, p378).
In fact for a vector $x=\sum_{n=1}^{\infty} x_{n}f_{n}$, it is trivial to check $Q_{\Delta} x=\sum_{n \in \Delta} x_{n}f_{n}$.
Then we have a same result for the \textsl{ unconditional basis const}(cf, definition4.2.28, \cite{B}, p379):
\begin{proposition}\label{UBC1}
If $F_{\psi}$ is a Schauder matrix, then the unconditional basis
const of the basis $\psi$ is $M_{ub}=\sup_{\Delta \subseteq \mathbb{N}} \{||F_{\psi}P_{\Delta}G_{\psi}^{*}||\}$.
\end{proposition}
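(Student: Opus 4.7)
The plan is to parallel the short argument for Proposition~\ref{BC1}: identify the matrix product $F_{\psi}P_{\Delta}G_{\psi}^{*}$ with the natural projection $Q_{\Delta}$ of the basis $\psi$ onto the coordinate set $\Delta$, and then apply the textbook definition of the unconditional basis constant (Megginson, definition~4.2.28) as the supremum of the norms of these projections over all $\Delta \subseteq \mathbb{N}$.

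First I would verify the identification $F_{\psi}P_{\Delta}G_{\psi}^{*} = Q_{\Delta}$ as operators on $\mathcal{H}$. Pick an arbitrary $x = \sum_{n=1}^{\infty} x_{n} f_{n}$. The computation done in the proof of Lemma~\ref{Lemma:Matrix Form 2} shows that applying $G_{\psi}^{*}$ to $x$ reads off the coefficient sequence $(x_{n})_{n=1}^{\infty}$: indeed, the biorthogonality $(f_{k}, g_{n}) = \delta_{kn}$ gives $(G_{\psi}^{*} x)_{n} = (x, g_{n}) = x_{n}$. The diagonal matrix $P_{\Delta}$ keeps the entries indexed by $\Delta$ and zeros out the rest, and multiplication by $F_{\psi}$ resynthesizes the corresponding vector. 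Hence $F_{\psi}P_{\Delta}G_{\psi}^{*} x = \sum_{n \in \Delta} x_{n} f_{n}$, which is by definition $Q_{\Delta} x$. The convergence of this last series, as well as the well-definedness of the operator $Q_{\Delta}$ (i.e., independence from the tail rearrangement), is inherited from the Schauder basis property of $\psi$ combined with the Hahn--Banach/Riesz argument already used for property~2 of Lemma~\ref{Lemma:Matrix Form 2}.

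Once this identification is in place, the proposition is immediate: by the cited definition, $M_{ub}$ is the supremum of $\|Q_{\Delta}\|$ over all $\Delta \subseteq \mathbb{N}$, and substituting the matrix expression for $Q_{\Delta}$ gives exactly the stated formula. The one subtlety, analogous to the situation in Proposition~\ref{BC1} but more delicate, is that $M_{ub}$ may well be $+\infty$: finiteness of the supremum is in fact \emph{equivalent} to $\psi$ being an unconditional basis, not merely a Schauder basis, so the equality in the statement is understood in $[0, \infty]$. The main obstacle to be careful about is therefore not convergence or boundedness, but bookkeeping: the $P_{\Delta}$ are only diagonal cut-offs of the identity, and the product $F_{\psi} P_{\Delta} G_{\psi}^{*}$ must be interpreted through its action on vectors rather than through naive matrix multiplication, since the intermediate ``matrix'' $F_{\psi} P_{\Delta}$ has columns that are in general not in $\ell^{2}$ for arbitrary infinite $\Delta$. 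The action-on-vectors interpretation, used already in Subsection~\ref{Subsec:Matrix Form of Basis}, circumvents this without any further work.
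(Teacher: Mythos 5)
Your proposal matches the paper's treatment: the paper gives no independent argument either, simply recording the identification $Q_{\Delta}x=\sum_{n\in\Delta}x_{n}f_{n}$ for $Q_{\Delta}=F_{\psi}P_{\Delta}G_{\psi}^{*}$ and then citing Megginson's definitions of the natural projection and the unconditional basis constant, exactly as you do. The only caveat --- one the paper itself shares --- is that for a conditional basis the subseries $\sum_{n\in\Delta}x_{n}f_{n}$ need not converge for every $x$ and every infinite $\Delta$, so ``inherited from the Schauder basis property'' overstates the justification; $Q_{\Delta}$ is in general only densely defined, which is consistent with your own remark that the equality is to be read in $[0,\infty]$.
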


In virtue of the proposition 4.2.29 and theorem 4.2.32 in the book \cite{B}, we have
\begin{proposition}\label{Proposition: Unconditional Matrix}
For a Schauder basis $\psi$, it is an unconditional basis
if and only if $\sup_{\Delta \subseteq \mathbb{N}} \{||F_{\psi}P_{\Delta}G_{\psi}^{*}||\}<\infty$.
\end{proposition}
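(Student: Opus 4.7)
The plan is to reduce the proposition to the classical characterization of unconditional bases via uniform boundedness of natural projections, using the matrix dictionary already developed above. First I would record that by Proposition \ref{UBC1} the quantity $\sup_{\Delta \subseteq \mathbb{N}} \|F_\psi P_\Delta G_\psi^*\|$ is precisely the unconditional basis constant $M_{ub}$, and that for $x = \sum_n \alpha_n f_n$ the projection $Q_\Delta = F_\psi P_\Delta G_\psi^*$ satisfies $Q_\Delta x = \sum_{n \in \Delta} \alpha_n f_n$ --- a direct consequence of the formula $Q_k x = \sum_{n=1}^k \alpha_n^x f_n$ in Lemma \ref{Lemma:Matrix Form 2}. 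So the claim reduces to showing that $\psi$ is unconditional if and only if the family $\{Q_\Delta\}_{\Delta \subseteq \mathbb{N}}$ is uniformly bounded.

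For the necessity direction, I would assume $\psi$ is unconditional, so every subseries $\sum_{n \in \Delta} \alpha_n f_n$ converges and hence each $Q_\Delta$ is defined on all of $\mathcal{H}$. For finite $F \subseteq \mathbb{N}$, the projection $Q_F$ is a bounded (finite-rank) operator. For each fixed $x$, the set $\{Q_F x : F \text{ finite}\}$ is bounded, since otherwise one could extract a rearrangement of $\sum_n \alpha_n f_n$ whose partial sums diverge, contradicting unconditional convergence. Banach--Steinhaus then gives $\sup_F \|Q_F\| < \infty$, and for general $\Delta$ an increasing exhaustion $F_k \uparrow \Delta$ with $Q_{F_k} x \to Q_\Delta x$ in norm transfers the bound to $\|Q_\Delta\|$.

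For sufficiency, I would assume $M_{ub} < \infty$ and show that for any $x = \sum_n \alpha_n f_n$ and any permutation $\pi$ of $\mathbb{N}$, the partial sums $S_k = Q_{\pi(\{1,\ldots,k\})} x$ converge to $x$. Given $\varepsilon > 0$, pick $N_0$ so that $\|x - \sum_{n \leq N_0} \alpha_n f_n\| < \varepsilon/(M_{ub}+1)$ and take $k$ large enough that $\pi(\{1,\ldots,k\}) \supseteq \{1,\ldots,N_0\}$. Since $Q_\Delta$ annihilates every $f_n$ with $n \notin \Delta$,
$$
S_k - \sum_{n \leq N_0} \alpha_n f_n = Q_{\pi(\{1,\ldots,k\}) \setminus \{1,\ldots,N_0\}}\Bigl(x - \sum_{n \leq N_0} \alpha_n f_n\Bigr),
$$
and the right-hand side has norm at most $M_{ub}\varepsilon/(M_{ub}+1)$. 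The triangle inequality then forces $\|S_k - x\| < \varepsilon$, so the expansion converges along every permutation, which is the standard equivalent form of unconditional convergence.

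The hard part will be the necessity step, specifically propagating pointwise boundedness of $\{Q_\Delta x\}$ --- obvious for finite $F$ but requiring justification for arbitrary $\Delta$ --- to a uniform norm bound over \emph{all} subsets of $\mathbb{N}$. This rests on combining Banach--Steinhaus with the observation that each $Q_\Delta$ is well-defined on the whole space once unconditionality is in hand, so that the exhaustion argument can close the gap between finite and infinite $\Delta$. The sufficiency half is then a purely quantitative consequence of the single bound $M_{ub}$.
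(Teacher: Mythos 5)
Your proof is correct. The paper does not actually prove this proposition---it obtains it by citing Proposition 4.2.29 and Theorem 4.2.32 of Megginson \cite{B} together with the identification $Q_\Delta = F_\psi P_\Delta G_\psi^*$ from Proposition \ref{UBC1}---and your argument (Banach--Steinhaus over finite $\Delta$ plus exhaustion for the necessity, the tail estimate $\|Q_{\pi(\{1,\ldots,k\})\setminus\{1,\ldots,N_0\}}(x-\sum_{n\le N_0}\alpha_n f_n)\|\le M_{ub}\,\varepsilon/(M_{ub}+1)$ for the sufficiency) is precisely the standard proof of that cited classical characterization, so it fills in the same route the paper delegates to the reference.
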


Following the notations in lemma \ref{Matrix Form}, as a direct corollary of lemma \ref{Matrix Form} and theorem 6 in \cite{Y2}(p28), we have

\begin{proposition}
$F$ is a Schauder matrix if and only if the adjoint matrix (conjugate transpose) $G$ of its left inverse $G^{*}$ is a Schauder matrix.
\end{proposition}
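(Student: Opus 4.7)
The plan is to reduce the matrix-level statement to its basis-level counterpart and then invoke Young's cited theorem. By Theorem \ref{Theorem: Schauder Matrix}, saying that an $\omega\times\omega$ matrix is a Schauder matrix is equivalent to saying that its column vector sequence is a Schauder basis of $\mathcal{H}$. So the proposition will follow once I show that, under the fixed ONB, the column vectors $\{f_n\}$ of $F$ form a Schauder basis if and only if the column vectors $\{g_n\}$ of $G=(G^{*})^{*}$ do.

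First I would set up the biorthogonality carefully. The discussion following Lemma \ref{Lemma:Matrix Form 2} already establishes that $(f_n,g_m)=\delta_{nm}$ for all $n,m$, and that the rows of $G^{*}$ are $l^{2}$ sequences (so columns of $G$ are $l^{2}$) while the columns of $F$ are $l^{2}$ (so rows of $F^{*}$ are $l^{2}$). Taking adjoints of the identity $G^{*}F=I$ yields $F^{*}G=I$, which exhibits $F^{*}$ as a left inverse of $G$ whose rows are $l^{2}$; its uniqueness can be argued exactly as in the proof of Lemma \ref{Lemma:Matrix Form 2}, using that $\{f_n\}$ spans $\mathcal{H}$ whenever it is a basis. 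Thus the structural properties required of $G$ in Theorem \ref{Theorem: Schauder Matrix} are the mirror of those required of $F$, with the roles of $F$ and $G$ interchanged.

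Next I would appeal to Theorem 6 of \cite{Y2} (p.~28), which, in the Hilbert space setting, states that a sequence $\{f_n\}$ is a Schauder basis of $\mathcal{H}$ if and only if its biorthogonal sequence $\{g_n\}$ is a Schauder basis. Combining this with Theorem \ref{Theorem: Schauder Matrix}:
\begin{center}
$F$ is a Schauder matrix $\Longleftrightarrow$ $\{f_n\}$ is a Schauder basis $\Longleftrightarrow$ $\{g_n\}$ is a Schauder basis $\Longleftrightarrow$ $G$ is a Schauder matrix.
\end{center}
The converse direction requires no separate argument because the situation is fully symmetric in $F$ and $G$: if we start from $G$ being a Schauder matrix, its left inverse is $F^{*}$ (by the same adjoint computation), so $(F^{*})^{*}=F$ plays exactly the role that $G$ played in the forward direction.

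The only genuine obstacle is bookkeeping: making sure that the matrix $G$ built by adjoining $G^{*}$ really is, on the fixed ONB $\{e_k\}$, the matrix whose column vector sequence equals the biorthogonal sequence $\{g_n\}$ appearing in Young's theorem. This identification was already done in the paragraph preceding the Definition (where one computes $y_n=(x,g_n)$ and $(f_k,g_n)=\delta_{kn}$), so no new analytic estimate is needed. Everything else—verifying properties 1, 2, 3 of Lemma \ref{Lemma:Matrix Form 2} for $G$—follows by swapping $F\leftrightarrow G$ and $G^{*}\leftrightarrow F^{*}$ in the corresponding argument for $F$.
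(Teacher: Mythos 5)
Your proposal is correct and follows essentially the same route as the paper: the paper gives no written proof, stating the result as ``a direct corollary of lemma \ref{Matrix Form} and theorem 6 in \cite{Y2}(p28)'', and your argument is exactly that reduction --- identify the columns of $G$ with the biorthogonal sequence $\{g_n\}$, invoke Young's theorem that a sequence is a basis iff its biorthogonal sequence is, and translate back via Theorem \ref{Theorem: Schauder Matrix} --- spelled out with the bookkeeping the paper leaves implicit.
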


As well known that a sequence of operators $T_{n}$ converges to an operator $T$ in SOT dose not imply $T_{n}^{*}$ converging
to $T$ in SOT, so the above proposition is not trivial.

\begin{corollary}
$M=\sup_{n} \{||FP_{n}G^{*}||\}<\infty$ if and only if $M^{'}=\sup_{n} \{||GP_{n}F^{*}||\}$ $<\infty$.
\end{corollary}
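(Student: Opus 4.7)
The corollary is far more elementary than the preceding proposition, because it concerns only the operator norm, for which the identity $\|T\|=\|T^{*}\|$ is automatic (whereas, as the remark warns, strong-operator convergence does not pass to adjoints). My plan is to identify $GP_{n}F^{*}$ as the adjoint of $FP_{n}G^{*}$ and then simply take norms.

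First I would verify that for each fixed $n$ the two matrix products define bounded operators on $\mathcal{H}$. Property~1 of Lemma~\ref{Matrix Form} ensures each column $f_{k}$ of $F$ is a vector in $\mathcal{H}$, and property~2 ensures each row of $G^{*}$ is an $\ell^{2}$-sequence. Since $P_{n}$ truncates to the first $n$ coordinates, $FP_{n}G^{*}$ is a finite sum of rank-one operators of the form $f_{k}\otimes g_{k}^{*}$ for $k=1,\dots,n$; in particular it is finite-rank, hence bounded. The analogous statement holds for $GP_{n}F^{*}$ after swapping the roles of the pair $(F,G^{*})$ with $(G,F^{*})$, since the rows of $F^{*}$ are (conjugates of) the $\ell^{2}$-columns of $F$ and the columns of $G$ are (conjugates of) the $\ell^{2}$-rows of $G^{*}$.

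Next, using $P_{n}^{*}=P_{n}$ and the rule $(ABC)^{*}=C^{*}B^{*}A^{*}$, a direct entry-wise check shows
$$
(FP_{n}G^{*})^{*}=(G^{*})^{*}P_{n}^{*}F^{*}=GP_{n}F^{*}.
$$
Applying $\|T\|=\|T^{*}\|$ yields $\|FP_{n}G^{*}\|=\|GP_{n}F^{*}\|$ for every $n$; taking the supremum produces the stronger equality $M=M'$ in $[0,\infty]$, which immediately gives the claimed equivalence.

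The only potential obstacle is bookkeeping: ensuring the formal matrix manipulations are legitimate operator identities despite the fact that $F$ and $G^{*}$ themselves need not represent bounded operators. This is resolved by the finite-rank observation in the first step, which reduces the adjoint computation to a finite sum of elementary rank-one operators, for which the usual adjoint formulas hold without any global boundedness hypothesis on $F$ or $G^{*}$.
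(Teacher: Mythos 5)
Your proof is correct, but it takes a different route from the paper's. The paper states this corollary as a consequence of the immediately preceding proposition ($F$ is a Schauder matrix if and only if $G$ is), which in turn rests on Lemma~\ref{Matrix Form} and Theorem~6 of \cite{Y2}; on that route the two suprema are finite for the same reason, namely that each is the basis constant of one of a pair of biorthogonal bases (Proposition~\ref{BC1}). You instead observe that $FP_{n}G^{*}=\sum_{k=1}^{n}f_{k}\otimes g_{k}$ is finite rank, that $GP_{n}F^{*}=(FP_{n}G^{*})^{*}$, and that $\|T\|=\|T^{*}\|$, which yields the stronger conclusion $M=M'$ with no basis theory at all. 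Your care in reducing the formal matrix identity $(FP_{n}G^{*})^{*}=GP_{n}F^{*}$ to a finite sum of rank-one operators is exactly the right precaution, since $F$ and $G^{*}$ need not be bounded; this only uses properties~1 and~2 of Lemma~\ref{Matrix Form} (columns of $F$ and rows of $G^{*}$ in $\ell^{2}$). The trade-off: the paper's route explains \emph{why} the statement sits next to the proposition about $G$ being a Schauder matrix (it is the quantitative shadow of that duality), while your argument is more elementary, needs weaker hypotheses, and sharpens the ``if and only if'' to an exact equality of the two constants.
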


\subsection{}From the definition of the Schauder matrix $F_{\psi}$, basic properties of Schauder matrix have natural relations to the Schauder basis $\psi$. This understanding lead us to the following definition.

\begin{definition}
A matrix $F$ is called an \textsl{unconditional, conditional, Riesz, normalized} or \textsl{quasinormal} respectively if and only if
the sequence of its column vectors comprise an unconditional, conditional, Riesz, normalized or quasinormal basis.  Two Schauder
matrices $F_{\psi}, F_{\varphi}$ are called \textsl{equivalent} if and only if the corresponding bases $\psi$ and $\varphi$ are equivalent.
\end{definition}

Here we use the term quasinormal instead of ``bounded'' to avoid ambiguity(cf \cite{Ole} p476, \cite{Singer} p21).
Arsove use the word ``similar'' in the same meaning as the word ``equivalent''(cf, \cite{Arsove} p19, \cite{B}p387).

Denote by $\pi_{\infty}$ the set of all permutations of $\mathbb{N}$(see \cite{Singer}, p361).
Denote by $U_{\pi}$ both the unitary operator which maps $e_{\pi(n)}$ to $e_{n}$ and the corresponding matrix under the
ONB $\{e_{n}\}_{n=1}^{\infty}$.

\begin{theorem}\label{BPS}
Assume that $F$ is a Schauder matrix and $G^{*}$ is its inverse matrix. We have \\
1. For each invertible matrix $X$, $XF$ is also a Schauder matrix. Moreover, $XF$ is unconditional(conditional)
if and only if $F$ is unconditional(conditional);\\
2. For each diagonal matrix $D=diag(\alpha_{1}, \alpha_{2}, \cdots)$ in which each diagonal element $\alpha_{k}$ is nonzero,
$FD$ is also a Schauder matrix. Moreover, $FD$ is unconditional(conditional)
if and only if $F$ is unconditional(conditional); \\
3. For a unconditional matrix $F$, $FU$ is also a unconditional matrix for $U \in \pi_{\infty}$; \\
4. Two Schauder matrix $F$ and $F^{'}$ are equivalent if and only if there is a invertible matrix $X$ such
that $XF=F^{'}$.
\end{theorem}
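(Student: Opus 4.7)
The plan is to verify each of the four parts by translating the hypotheses into the three-condition characterization of Schauder matrices in Lemma \ref{Matrix Form} together with the norm criterion of Proposition \ref{Proposition: Unconditional Matrix} for unconditionality. The main task in each case is to write down the correct candidate for the left inverse of the modified matrix and then check the SOT-convergence and the uniform-norm bound.

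For part 1, I propose $G^{*}X^{-1}$ as the left inverse of $XF$; its rows lie in $\ell^{2}$ since $X^{-1}$ is bounded and the rows of $G^{*}$ are $\ell^{2}$, and the partial sum projections collapse to $(XF)P_{k}(G^{*}X^{-1})=XQ_{k}X^{-1}$, which converges to $I$ in SOT because $Q_{k}\to I$ in SOT and $X,X^{-1}$ are bounded. So Lemma \ref{Matrix Form} gives that $XF$ is a Schauder matrix. The sandwich inequality $\|XFP_{\Delta}G^{*}X^{-1}\|\le \|X\|\,\|X^{-1}\|\,\|FP_{\Delta}G^{*}\|$, together with its mirror using $X^{-1}$, shows that the suprema in Proposition \ref{Proposition: Unconditional Matrix} are finite for $XF$ iff they are finite for $F$, yielding the unconditional/conditional equivalence. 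For part 2, since $D$ is diagonal with nonzero entries we have $DP_{k}D^{-1}=P_{k}$, and so $D^{-1}G^{*}$ is the left inverse of $FD$ (its $k$-th row is just $\alpha_{k}^{-1}$ times the $k$-th row of $G^{*}$, still $\ell^{2}$), while $(FD)P_{\Delta}(D^{-1}G^{*})=FP_{\Delta}G^{*}$ holds verbatim. Thus all three conditions of Lemma \ref{Matrix Form} and the supremum in Proposition \ref{Proposition: Unconditional Matrix} are inherited without change.

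For part 3, set $U=U_{\pi}$; a short computation on the basis vectors $e_{n}$ shows $UP_{\Delta}U^{-1}=P_{\pi^{-1}(\Delta)}$, so the natural projections of $FU$ are the family $\{FP_{\pi^{-1}(\Delta)}G^{*}\}_{\Delta}$, i.e. the same family as for $F$ re-indexed by a bijection on subsets of $\mathbb{N}$. Uniform boundedness is therefore immediate from the unconditionality of $F$ via Proposition \ref{Proposition: Unconditional Matrix}, and SOT-convergence of $FUP_{k}U^{-1}G^{*}=FP_{\pi^{-1}(\{1,\dots,k\})}G^{*}$ to $I$ follows from the standard fact that for an unconditional basis, the partial sums along any sequence of finite subsets of $\mathbb{N}$ exhausting it converge to the vector in norm. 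For part 4, one direction is immediate from part 1: if $XF=F'$ with $X$ invertible, then the $n$-th columns satisfy $Xf_{n}=f_{n}'$, and continuity of $X$ and $X^{-1}$ transports the convergence of $\sum\alpha_{n}f_{n}$ back and forth with that of $\sum\alpha_{n}f_{n}'$, so $\psi$ and $\psi'$ are equivalent. The converse is the classical basis-equivalence theorem (cf.\ \cite{Singer}): if $\psi$ and $\psi'$ are equivalent, then $f_{n}\mapsto f_{n}'$ extends by linearity to a well-defined operator $X$ on $\mathcal{H}$, whose boundedness follows from the closed graph or uniform boundedness theorem and whose inverse is obtained by reversing the roles of $\psi$ and $\psi'$; in matrix language, $XF=F'$ column by column.

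The principal obstacle I expect is part 3, where one must correctly compute how $U_{\pi}$ conjugates the coordinate projections $P_{k}$ and then appeal to the subset-exhaustion characterization of unconditional convergence to upgrade the uniform boundedness to SOT-convergence; this is precisely where the hypothesis that $F$ be unconditional (rather than merely Schauder) is used, since otherwise reindexing by $\pi^{-1}$ need not produce convergent partial sums. The remaining parts reduce to routine verification once the candidate left-inverse matrix is identified, or to a direct citation of the standard basis-equivalence theorem.
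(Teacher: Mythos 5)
Your proposal is correct and follows essentially the same route as the paper: the paper proves only part 1 explicitly, using exactly your argument (the left inverse $G^{*}X^{-1}$, conjugated partial-sum projections, and the sandwich inequality $\|XFPG^{*}X^{-1}\|\le\|X\|\,\|X^{-1}\|\,\|FPG^{*}\|$), and refers to Megginson and Arsove for parts 2--4. Your write-up simply supplies the routine verifications for those remaining parts (the key observations $DP_{\Delta}D^{-1}=P_{\Delta}$, $U_{\pi}P_{\Delta}U_{\pi}^{-1}=P_{\pi^{-1}(\Delta)}$ together with the subset-exhaustion characterization of unconditional convergence, and the classical basis-equivalence theorem), which is consistent with, and more complete than, what the paper records.
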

\begin{proof}
Property 1, 2, 3 and 4 are basic facts about basis just in a matrix language. Their counterparts are
proposition 4.1.8, 4.2.14, 4.1.5, 4.2.12, and corollary 4.2.34 in \cite{B}, Theorem 1 in \cite{Arsove}.
Some of those facts are easy to check by our
lemma \ref{Matrix Form}. As an example, we shall prove property 1. Let $F^{'}=XF$, then clearly  $G^{*'}=GX^{-1}$ is its inverse matrix.
Both properties 1 and 2 in lemma \ref{Matrix Form} hold immediately. To verify property 3, we know that
$FP_{n}G^{*}$ converges to $I$ in SOT if and only if $XFP_{n}G^{*}X^{-1}$ converges to $I$ in SOT. Also we have
$$
||XFPG^{*}X^{-1}|| \le ||X||\cdot||X^{-1}|| \cdot ||FPG^{*}||
$$
for any natural projection $P$, which implies the last part of property 1(cf, \cite{B} theorem 4.2.32).
\end{proof}

\subsection{}Now we turn to study the basic properties of Schauder operators.
Recall that a Schauder operator $T$ is an operator mapping
some ONB into a Schauder basis.
In his paper \cite{Ole}, Olevskii call an operator to be \textsl{generating} if and only if it maps some ONB
into a quasinormal conditional basis. Hence our definition of Schauder operator is a generalization of Olevskii's
one.

\begin{theorem}\label{BPSO}
Following conditions are equivalent:\\
1. $T$ is a Schauder operator;\\
2. $T$ maps some ONB $\{e_{n}\}_{n=1}^{\infty}$ into a basis;\\
3. $T$ has a polar decomposition $T=UA$ in which $A$ is a Schauder operator;\\
4. Assume that $T$ has a matrix representation $F$ under a fixed ONB $\{e_{n}\}_{n=1}^{\infty}$. There
is some unitary matrix $U$ such that $FU$ is a Schauder matrix.
\end{theorem}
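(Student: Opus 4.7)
My strategy is to prove the four conditions equivalent by establishing $(1)\Leftrightarrow(2)$, $(1)\Leftrightarrow(4)$, and $(1)\Leftrightarrow(3)$. The core tools are the matrix dictionary of Lemmas \ref{Matrix Form} and \ref{Lemma:Matrix Form 2}, which let me identify ``$M$ is a Schauder matrix'' with ``the columns of $M$ form a Schauder basis,'' together with the invariance of the Schauder property under multiplication by an invertible matrix, Theorem \ref{BPS}(1). The equivalence $(1)\Leftrightarrow(2)$ is essentially the definition unpacked: the $k$th column of the matrix of $T$ in any ONB $\{e_n\}$ is the coordinate vector of $Te_k$, so that matrix is a Schauder matrix precisely when $\{Te_n\}$ is a Schauder basis.

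For $(1)\Leftrightarrow(4)$, I fix an ONB $\{e_n\}$ and the corresponding matrix $F$ of $T$. Every ONB of $\mathcal{H}$ has the form $\{Ue_n\}$ for a unique unitary $U$, whose matrix in $\{e_n\}$ I also call $U$. A short computation identifies the $k$th column of the product $FU$ with the coordinate vector of $TUe_k$, so $FU$ is a Schauder matrix iff $T$ carries the ONB $\{Ue_n\}$ to a Schauder basis. Since every ONB arises this way, the equivalence follows immediately.

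For $(1)\Leftrightarrow(3)$, I use the polar decomposition $T=UA$ with $A=|T|=(T^{*}T)^{1/2}$. If (1) holds then $T$ sends an ONB to a linearly independent spanning sequence, so $T$ is injective with dense range, which forces the partial isometry $U$ to be unitary (its initial and final spaces both equal $\mathcal{H}$). Hence $A=U^{*}T$, and Theorem \ref{BPS}(1) applied with the invertible operator $U^{*}$ says that the image of the same ONB under $A$ remains a Schauder basis, proving (3). Conversely, reading (3) in the standard way (so that $U$ is unitary), $T=UA$ and Theorem \ref{BPS}(1) applied to the Schauder basis $\{Ae_n\}$ and the invertible operator $U$ gives that $\{Te_n\}$ is again a Schauder basis, proving (1).

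The main obstacle I anticipate is precisely ensuring that the polar-decomposition factor $U$ in (3) is unitary and not merely a partial isometry. A naive partial-isometry reading would break $(3)\Rightarrow(1)$, since an isometry with proper closed range cannot send a Schauder basis of $\mathcal{H}$ to a Schauder basis of $\mathcal{H}$. The fix is to read (3) with $U$ unitary, which is the convention that is automatic whenever $T$ is already Schauder, and which is the natural compatibility hypothesis with $A$ being Schauder. Once this bookkeeping is in place, each implication reduces to a one-line application of Theorem \ref{BPS}(1).
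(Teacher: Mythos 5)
Your proposal is correct and follows essentially the same route as the paper: identify the columns of the matrix with the coordinate vectors of the images of an ONB, then pass between $T$, its positive part $A$, and the transformed ONB using the invertible-multiplication invariance of Theorem \ref{BPS} together with Proposition \ref{IDR}. The one place you go beyond the paper is in making explicit that the polar factor $U$ in condition 3 must be read as unitary (guaranteed by injectivity and dense range once $T$ is Schauder, but genuinely needed for $(3)\Rightarrow(1)$, as the unilateral shift shows); the paper uses this silently.
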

\begin{proof}
$2\Rightarrow 1$. The $k-$th column of the matrix of $T$ under the ONB $\{e_{n}\}_{n=1}^{\infty}$ is just the
$l^{2}-$coefficients of $Te_{k}$.

$1\Rightarrow 3$. Assume that $\{f_{n}\}_{n=1}^{\infty}$ is a basis in which $f_{n}$ is the $n-$th column
of the matrix $F$ of $T$ under some ONB. Then if we denote the matrix of $U$ and $A$ also by the same notations,
we have $UA=F$. Property 1 of lemma \ref{BPS} tell us $U^{*}F=A$ is also a Schauder matrix.

$3\Rightarrow 4$. Assume that $\{g_{n}\}_{n=1}^{\infty}$ is an ONB such that the matrix of $A$ under it is a Schauder matrix.
Then the operator $U$ defined as $Ue_{n}=g_{n}$ is a unitary operator and the $n-$th column of its matrix under the
ONB $\{e_{n}\}_{n=1}^{\infty}$ is just the $l^{2}-$coefficients of $g_{n}$. Hence we have $AU$ is a Schauder matrix.

$4\Rightarrow 1$. The column vector sequence of the unitary matrix $U$ is an ONB. The matrix of $T$ under this ONB
is just $U^{*}FU$. Property 1 of lemma \ref{BPS} shows that $U^{*}FU$ is a Schauder matrix since $FU$ is a Schauder
matrix itself.
\end{proof}

The equivalence $1 \Leftrightarrow 3$ had been used in proof of the theorem $1^{'}$ of \cite{Ole}, although
Olevskii had not given an explanation.

\begin{proposition}\label{IDR}
A Schauder operator $T$ must be injective and has a dense range in $\mathcal{H}$.
\end{proposition}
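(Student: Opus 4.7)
The plan is to unwind the definition and then prove the two conclusions separately, each by reducing to a standard property of Schauder bases. By hypothesis there is an ONB $\{e_n\}_{n=1}^\infty$ of $\mathcal{H}$ such that $f_n := Te_n$ is a Schauder basis of $\mathcal{H}$. Lemma \ref{Lemma:Matrix Form 2} then supplies the biorthogonal sequence $\{g_n^*\}$ (the rows of $G_\psi^*$ regarded as vectors of $\mathcal{H}$), characterised by $\langle f_k, g_n^*\rangle = \delta_{kn}$ for all $k,n\ge 1$.

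Dense range is immediate. Each $f_n = Te_n$ lies in $\operatorname{range}(T)$, so $\operatorname{span}\{f_n : n\ge 1\}\subseteq\operatorname{range}(T)$. Since $\{f_n\}$ is a Schauder basis every vector of $\mathcal{H}$ is the norm limit of its partial Schauder sums, hence $\overline{\operatorname{span}}\{f_n\}=\mathcal{H}$ and therefore $\overline{\operatorname{range}(T)}=\mathcal{H}$.

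For injectivity I would use the biorthogonal system to read off coefficients. The computation
\[
\langle e_k, T^* g_n^*\rangle = \langle Te_k, g_n^*\rangle = \langle f_k, g_n^*\rangle = \delta_{kn}
\]
forces $T^* g_n^* = e_n$ for every $n$. Consequently, if $Tx = 0$ then for each $n$
\[
\langle x, e_n\rangle = \langle x, T^* g_n^*\rangle = \langle Tx, g_n^*\rangle = 0,
\]
and since $\{e_n\}$ is an ONB this yields $x = 0$.

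The only delicate point is the adjoint step, which implicitly uses that $T$ is bounded so that $T^*$ is automatically available; this is consistent with the rest of Section 2 (e.g.\ Theorem \ref{BPSO} applies a polar decomposition to $T$), so I would take boundedness as standing. If one prefers to avoid adjoints entirely, the symmetric variant is to expand $x = \sum x_n e_n$, pass $T$ through the sum by continuity to obtain $Tx = \sum x_n f_n$, and then invoke the uniqueness of Schauder basis expansions applied to the zero vector. Either way, the proposition is essentially a direct translation of the defining property of a Schauder basis across $T$, and I expect no further obstacles.
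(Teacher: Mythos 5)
Your proposal is correct, and it is essentially the paper's argument: the dense-range step is identical, and the ``symmetric variant'' you mention for injectivity (expand $x=\sum x_ne_n$, push $T$ through by continuity, and invoke uniqueness of the Schauder expansion of $0$) is exactly what the paper does. Your primary injectivity route via $T^*g_n^*=e_n$ is a harmless elaboration of the same biorthogonality fact and is also valid under the standing assumption that $T$ is bounded.
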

\begin{proof}
$T$ must be injective since the representation of $0$ is unique. For a basis $\{f_{n}\}_{n=1}^{\infty}$,
the finite linear combination of $\{f_{n}\}_{n=1}^{\infty}$ is dense in the Hilbert space $\mathcal{H}$.
Therefore the range of $T$ must be dense in $\mathcal{H}$.
\end{proof}


\subsection{}If $T$ is a Schauder operator, does for each ONB sequence $\{e_{n}\}_{n=1}^{\infty}$ the vector sequence
$\{Te_{n}\}_{n=1}^{\infty}$ always be a basis? In this subsection, we shall show that
the answer is negative in general and it is true only in the case that $T$
is an invertible operator.

\begin{lemma}\label{LP}
Assume that $A$ is a positive operator satisfying $\sigma(A) \subseteq [\lambda_{1}, \lambda_{2}]$
and $\lambda_{1}, \lambda_{2}\in \sigma(A)$ for some $\lambda_{1}>0$. Then for any const $\varepsilon>0$
small enough,
there is a rank 1 projection $P$ such that
$\frac{1}{2\sqrt{2}}\frac{\lambda_{2}}{\lambda_{1}}-\varepsilon<||APA^{-1}||$.
\end{lemma}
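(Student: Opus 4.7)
The plan is to reduce the lemma to a concrete two-vector computation that the spectral theorem delivers at once. The starting observation is that for any rank-one orthogonal projection $P$ onto the span of a unit vector $v$, self-adjointness of $A$ (hence of $A^{-1}$) gives
\[
APA^{-1}w=\langle A^{-1}w,v\rangle Av=\langle w,A^{-1}v\rangle Av,
\]
so $APA^{-1}$ is a rank-one operator whose operator norm, by Cauchy--Schwarz (with equality on $w=A^{-1}v$), equals $\|Av\|\cdot\|A^{-1}v\|$. The lemma therefore reduces to producing a unit vector $v$ with $\|Av\|\cdot\|A^{-1}v\|\ge \tfrac{\lambda_2}{2\sqrt{2}\,\lambda_1}-\varepsilon$.

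To build $v$, I would use the spectral theorem for the positive operator $A$. Since $\lambda_1,\lambda_2\in\sigma(A)$ and $\lambda_1>0$, for every sufficiently small $\delta>0$ (take $\delta<(\lambda_2-\lambda_1)/2$; the degenerate case $\lambda_1=\lambda_2$ forces $A=\lambda_1 I$ and makes the bound trivial) the spectral projections $E_1=\chi_{[\lambda_1,\lambda_1+\delta]}(A)$ and $E_2=\chi_{[\lambda_2-\delta,\lambda_2]}(A)$ are both nonzero and have mutually orthogonal ranges. Pick unit vectors $u_1\in E_1\mathcal{H}$ and $u_2\in E_2\mathcal{H}$ and set
\[
v=\frac{u_1+u_2}{\sqrt{2}}.
\]
Then $\|v\|=1$; moreover $Au_i,A^{-1}u_i\in E_i\mathcal{H}$, so $Au_1\perp Au_2$ and $A^{-1}u_1\perp A^{-1}u_2$. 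Pythagoras gives $\|Av\|^{2}=\tfrac12(\|Au_1\|^{2}+\|Au_2\|^{2})$ and the analogous identity for $A^{-1}v$.

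The final step is a routine functional-calculus estimate. Using $\|Au_2\|\ge\lambda_2-\delta$ and $\|A^{-1}u_1\|\ge(\lambda_1+\delta)^{-1}$, together with the trivial bounds $\|Au_1\|\ge\lambda_1$ and $\|A^{-1}u_2\|\ge\lambda_2^{-1}$, the product $\|Av\|^{2}\|A^{-1}v\|^{2}$ is bounded below by an explicit expression that tends to $\bigl(\tfrac{\lambda_1^{2}+\lambda_2^{2}}{2\lambda_1\lambda_2}\bigr)^{2}\ge\tfrac{\lambda_2^{2}}{4\lambda_1^{2}}$ as $\delta\downarrow0$. Taking square roots yields $\|APA^{-1}\|\ge\tfrac{\lambda_2}{2\lambda_1}-o(1)$, and since $\tfrac12>\tfrac{1}{2\sqrt{2}}$ one can pick $\delta$ small enough that $\|APA^{-1}\|>\tfrac{1}{2\sqrt{2}}\tfrac{\lambda_2}{\lambda_1}-\varepsilon$.

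I anticipate no serious obstacle. The only mildly technical point is producing the orthogonal pair $u_1,u_2$, which is exactly where the hypothesis $\lambda_1,\lambda_2\in\sigma(A)$ intervenes (ensuring both spectral projections are nonzero); after that everything reduces to the rank-one norm identity, Pythagoras, and elementary estimation. The looseness between the natural constant $\tfrac12$ and the stated $\tfrac{1}{2\sqrt{2}}$ in fact gives a comfortable margin for the choice of $\delta$.
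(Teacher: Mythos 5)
Your proposal is correct and follows essentially the same route as the paper: the same unit vector $v=\frac{1}{\sqrt{2}}(u_1+u_2)$ built from spectral projections onto $[\lambda_1,\lambda_1+\delta]$ and $[\lambda_2-\delta,\lambda_2]$, the same rank-one projection, and the same elementary spectral estimates. Your only deviation is using the exact rank-one norm identity $\|APA^{-1}\|=\|Av\|\,\|A^{-1}v\|$ instead of the paper's lower bound $\|APA^{-1}v\|=|(A^{-1}v,v)|\,\|Av\|$, which even yields the slightly sharper constant $\tfrac{1}{2}$ in place of $\tfrac{1}{2\sqrt{2}}$.
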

\begin{proof}
Let $e_{1}, e_{2}$ be two normalized vectors in $\mathcal{H}$ such that
$$
e_{1} \in E_{[\lambda_{1}, \lambda_{1}+\delta]}, e_{2} \in E_{[\lambda_{2}-\delta, \lambda_{2}]}.
$$
in which $E_{[\lambda_{1}, \lambda_{1}+\delta]}$ and $E_{[\lambda_{2}-\delta, \lambda_{2}]}$ is the spectral projection of $A$
on the interval $[\lambda_{1}, \lambda_{1}+\delta]$ and $[\lambda_{2}-\delta, \lambda_{2}]$ respectively(cf, \cite{Conway}, pp269-272).
Then for $\delta < \frac{\lambda_{2}-\lambda_{1}}{2}$, we have $(e_{1}, e_{2})=0$ and
$$
\lambda_{1} \le||Ae_{1}|| \le \lambda_{1}+\delta, \lambda_{2}-\delta \le ||Ae_{2}|| \le \lambda_{2}.
$$
Consider the vector $e=\frac{1}{\sqrt{2}}e_{1}+\frac{1}{\sqrt{2}}e_{2}$
and the operator $P=e\otimes e$ defined as:
$
Px=(x, e)e.
$
It is trivial to check that $P$ is a rank 1 orthogonal projection. Now we have
$
APA^{-1}(x)=(A^{-1}x, e)Ae,
$
hence $||APA^{-1}||=\sup_{||x||=1} ||APA^{-1}x||$. Then we have
$$
\begin{array}{rl}
(A^{-1}e, e)=&\frac{1}{\sqrt{2}}(A^{-1}e_{1}, e)+\frac{1}{\sqrt{2}}(A^{-1}e_{2}, e) \\
            =&\frac{1}{2}\{(A^{-1}e_{1}, e_{1})+(A^{-1}e_{2}, e_{2})\} \\
         \ge &\frac{1}{2}\{\frac{1}{\lambda_{1}+\delta}+\frac{1}{\lambda_{2}}\}
\end{array}
$$
and
$$
||Ae||^{2} \ge \frac{1}{2}\lambda_{1}^{2}+\frac{1}{2}(\lambda_{2}-\delta)^{2}.
$$
Therefore the following inequality holds:
$$
\begin{array}{rl}
||APA^{-1}e|| \ge & \frac{1}{2}\{\frac{1}{\lambda_{1}+\delta}+\frac{1}{\lambda_{2}}\}\sqrt{\frac{1}{2}\lambda_{1}^{2}+\frac{1}{2}(\lambda_{2}-\delta)^{2}} \\
\ge & \frac{1}{2\sqrt{2}}\frac{\lambda_{2}-\delta}{\lambda_{1}+\delta}.
\end{array}
$$
Let $\varepsilon$ be a const satisfying $\varepsilon < \frac{1}{2\sqrt{2}}$.
Hence for the positive number $\delta <\frac{2\sqrt{2}\lambda_{1}^{2}\varepsilon}{(1-2\sqrt{2}\varepsilon)\lambda_{1}+\lambda_{2}}$
the required inequality holds.
\end{proof}

\begin{theorem}
If an operator $A$ maps every ONB sequence into a basis, then $A$ must be an invertible operator.
\end{theorem}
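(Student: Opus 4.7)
The plan is to prove the contrapositive: if $A$ is not invertible then some ONB of $\mathcal{H}$ is mapped by $A$ to a sequence that fails to be a Schauder basis. Under the hypothesis, $A$ is a Schauder operator, so by Proposition \ref{IDR} it is injective with dense range. Non-invertibility then forces $\mathrm{ran}(A)$ to be a proper dense subspace of $\mathcal{H}$; in particular $\mathrm{ran}(A)$ is not closed. Because the range of $A^{*}$ is closed if and only if the range of $A$ is, and because $A^{*}$ is itself injective with dense range (dualising injectivity and density of $A$), the range $\mathrm{ran}(A^{*})$ is likewise a proper dense subspace of $\mathcal{H}$. Pick any unit vector $u_{1} \in \mathcal{H} \setminus \mathrm{ran}(A^{*})$ and extend $\{u_{1}\}$ to an ONB $\{u_{n}\}_{n \geq 1}$ of $\mathcal{H}$.

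I will then argue that $\{Au_{n}\}$ is not a Schauder basis. If it were, Lemma \ref{Lemma:Matrix Form 2} (or equivalently the standard existence of biorthogonal coordinate functionals for a Schauder basis, produced by Hahn--Banach and Riesz representation) would yield a biorthogonal sequence $\{g_{n}\}_{n \geq 1} \subset \mathcal{H}$ with $(Au_{k}, g_{n}) = \delta_{kn}$ for all $k, n$. Passing to the adjoint, $(u_{k}, A^{*} g_{n}) = \delta_{kn}$ for every $k$, and expanding $A^{*} g_{n}$ in the ONB $\{u_{k}\}$ yields $A^{*} g_{n} = \sum_{k} (A^{*} g_{n}, u_{k}) u_{k} = u_{n}$. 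Taking $n = 1$ gives $u_{1} = A^{*} g_{1} \in \mathrm{ran}(A^{*})$, contradicting the choice of $u_{1}$.

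Consequently $\{Au_{n}\}$ cannot be a basis, contradicting the hypothesis that $A$ maps every ONB into a basis; hence $A$ must be invertible.

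The main step is the duality identification $A^{*} g_{n} = u_{n}$, which reduces the whole claim to the defining property of $u_{1}$. In this route Lemma \ref{LP} is not strictly required for the theorem itself. A more quantitative alternative, closer in spirit to the lemma immediately preceding the theorem, would instead construct $\{u_{n}\}$ from pairs of approximate spectral vectors $v_{k}, w_{k}$ with $\|Aw_{k}\|/\|Av_{k}\| \to \infty$, set $u_{2k-1} = (v_{k}+w_{k})/\sqrt{2}$ and $u_{2k} = (v_{k}-w_{k})/\sqrt{2}$, and apply Lemma \ref{LP} to $A$ restricted to each two-dimensional spectral block to force the partial sum projections $Q_{2k-1}$ to have norm at least $\frac{1}{2\sqrt{2}}\,\|Aw_{k}\|/\|Av_{k}\|$, violating Proposition \ref{BC1}; but the biorthogonality argument above is the cleanest path.
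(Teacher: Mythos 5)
Your proof is correct, and it takes a genuinely different route from the paper's. The paper argues spectrally: after reducing to a positive operator via Theorem \ref{BPSO}, it observes that non-invertibility forces $0$ to be an accumulation point of $\sigma(A)$, extracts spectral intervals $[\lambda_{2n},\lambda_{2n-1}]$ with $\lambda_{2n}/\lambda_{2n-1}\to 0$, applies Lemma \ref{LP} on each spectral block to produce rank-one projections $P^{(n)}_1$ with $\|A_nP^{(n)}_1A_n^{-1}\|>n$, and then arranges an ONB so that each $A_nP^{(n)}_1A_n^{-1}$ is a difference of two consecutive partial-sum projections of the image sequence; the resulting unbounded basis constant contradicts Proposition \ref{BC1}. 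Your route is a soft duality argument: granting (via Proposition \ref{IDR}) that $A$ is injective with dense range, non-invertibility makes $\mathrm{ran}(A)$ non-closed, hence by the closed range theorem $\mathrm{ran}(A^{*})$ is a proper dense subspace, and the identity $A^{*}g_{n}=u_{n}$ forced by biorthogonality shows that an ONB containing a unit vector outside $\mathrm{ran}(A^{*})$ cannot be mapped even to a complete minimal system, let alone a Schauder basis. All the steps check out: the existence of the biorthogonal vectors $g_n$ is exactly property 2 of Lemma \ref{Lemma:Matrix Form 2}, and the expansion $A^{*}g_{n}=\sum_{k}(A^{*}g_{n},u_{k})u_{k}=u_{n}$ is immediate. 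Your argument is shorter and proves something slightly stronger (failure of minimality for every ONB through one fixed bad vector); you could even bypass the closed range theorem entirely by noting that if every unit vector lay in $\mathrm{ran}(A^{*})$ then $A^{*}$ would be a bounded bijection, hence invertible, hence so would $A$ be. What the paper's quantitative construction buys is an explicit blow-up of the partial-sum projections and compatibility with the basis-constant machinery (Propositions \ref{BC1} and \ref{UBC1}) that is reused in Section 3; your final sketch of a quantitative variant via Lemma \ref{LP} is essentially that argument, but it is not needed for the theorem.
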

\begin{proof}A direct result of \ref{BPS} is that if an operator $A$ maps every ONB into a basis then it maps each ONB into a unconditional
basis. By virtue of theorem \ref{BPSO}, we can assume that $T$ is a positive operator. We need to show that $0 \notin \sigma(A)$.
Firstly, we have $0 \notin \sigma_{p}(A)$ by above proposition \ref{IDR} since $A$ is a Schauder operator. If
$0 \in \sigma(p)$ then $0$ must be an accumulation point of $\sigma(T)$. Hence we can choose a sequence $\{\lambda_{k}\}_{k=1}^{\infty}$
such that: \\
1. $\{\lambda_{k}\}_{k=1}^{\infty} \subseteq \sigma(A)$; and \\
2. $\lambda_{k+1}<\lambda_{k}$ and $\frac{\lambda_{2n}}{\lambda_{2n-1}}<\frac{1}{n+1}$.\\
Denote by $I_{0}=\sigma(A)-\cup_{n=1}^{\infty}[\lambda_{2n}, \lambda_{2n-1}]$ and $A_{0}=AE_{I_{0}}$.
Let $A_{n}=AE_{[\lambda_{2n}, \lambda_{2n-1}]}$, then we have $A=A_{0}\oplus A_{1}\oplus A_{2}\oplus A_{3} \cdots$.
And each operator $A_{n}$ is an
invertible positive operator for $n \ge 1$. Now by above lemma \ref{LP}, we can choose a vector
$e_{1}^{(n)} \in Ran E_{[\lambda_{2n}, \lambda_{2n-1}]}$ such that the
projection $P^{(n)}_{1}=e_{1}^{(n)}\otimes e_{1}^{(n)}$ satisfying
$$
AP^{(n)}_{1}A^{-1}=A_{n}P^{(n)}_{1}A_{n}^{-1}>n
$$
for each n. Here we use the fact
$$
E_{[\lambda_{2n}, \lambda_{2n-1}]}P^{(n)}_{1}=P^{(n)}_{1}E_{[\lambda_{2n}, \lambda_{2n-1}]}=P^{(n)}_{1}.
$$
Now for each subspace $Ran E_{[\lambda_{2n}, \lambda_{2n-1}]}$ we choose an ONB $\{f_{k}^{(n)}\}_{k=1}^{\alpha_{k}}$ such that
$e^{(n)}_{1}=f^{(n)}_{1}$. Moreover, choose an ONB $\{e^{(0)}_{k}\}_{k=1}^{\alpha_{0}}$ of the subspace $Ran E_{I_{0}}$.
Here $\alpha_{k}$ is a finite number or the countable cardinal which is equal to the dimension of the subspace
$Ran E_{[\lambda_{2n}, \lambda_{2n-1}]}$ and $Ran E_{I_{0}}$ respectively.
Clearly the set
$\{f^{(n)}_{k}; n=0, 1, 2,\cdots \hbox{ and }k=1, 2, \cdots, \alpha_{k}\}$ is an ONB for $\mathcal{H}$ itself. It is a
countable set and each its arrangement $\psi$ give an ONB sequence of $\mathcal{H}$. In more details, denote by
$\Delta =\{(n, k); n=0, 1, 2, \cdots \hbox{ and } k=1, 2, \cdots, \alpha_{k}\}$.
For any bijection
$\sigma : \Delta \rightarrow \mathbb{N}$, define $g_{n}=f_{t}^{(s)}, (s, t)=\sigma^{-1}(n)$. Then $\psi_{\sigma}=\{g_{n}\}_{n=1}^{\infty}$
is an ONB sequence.
\begin{claim}
For each ONB sequence $\psi_{\sigma}$, $\{Ag_{n}\}_{n=1}^{\infty}$ is not a basis.
\end{claim}
We have shown that if $\{Ag_{n}\}_{n=1}^{\infty}$ is a basis it must be a unconditional one. So it is enough to show
that it is not a unconditional basis, which can be verified by its unconditional const. Assume that the claim is not true, that is,
$\{Ag_{n}\}_{n=1}^{\infty}$ is a basis.
It is trivial to check that
$A_{n}P^{(n)}_{1}A_{n}^{-1}$ is a natural projection corresponding to the basis  $\{Ag_{n}\}$. In fact, we have
$$
A_{n}P^{(n)}_{1}A_{n}^{-1}=P_{\sigma(n, 1)}-P_{\sigma(n, 1)-1}.
$$
Here we denote by $P_{n}$ the $n-th$ partial sum operator so called in the book \cite{Singer}.
But now we have $||A_{n}P^{(n)}_{1}A_{n}^{-1}||\rightarrow \infty$ which counters to the fact that  a unconditional basis
must have a finite unconditional const
(cf, \cite{B}, corollary4.2.26).
\end{proof}

\begin{corollary}
If an operator $T$ is not invertible, then there is some ONB $\{e_{n}\}_{n=1}^{\infty}$ such that the
sequence  $\{Te_{n}\}_{n=1}^{\infty}$ is not a basis.
\end{corollary}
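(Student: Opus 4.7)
The corollary is literally the contrapositive of the theorem stated immediately above it, so my plan is essentially just to spell that out carefully.

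The plan is to assume, for contradiction, that every ONB $\{e_n\}_{n=1}^\infty$ is mapped by $T$ to a (Schauder) basis $\{Te_n\}_{n=1}^\infty$. Then the hypothesis of the preceding theorem is satisfied, and we conclude that $T$ is invertible, contradicting the assumption. Hence there must exist at least one ONB $\{e_n\}$ for which $\{Te_n\}$ fails to be a basis.

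I would make one small structural remark to avoid a logical gap. Strictly speaking, if $T$ is not even a Schauder operator then by definition no ONB is mapped to a Schauder basis, and the conclusion is immediate. So the only nontrivial case is that $T$ is a Schauder operator which is not invertible; in that case one already has \emph{some} ONB mapped to a basis, and the theorem guarantees that one cannot simultaneously have \emph{every} ONB mapped to a basis. This case split makes the contrapositive argument airtight.

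There is no obstacle here: the only substantive content is in the theorem itself (polar decomposition reduction to a positive operator, spectral construction of the sequence $\{\lambda_k\}$ accumulating at $0$, and Lemma \ref{LP} forcing unbounded natural projections). Once that theorem is in hand, the corollary is just restating its contrapositive, and the only thing to check is that the reformulation is indeed logically equivalent — which it is, since "maps every ONB into a basis" is exactly the negation of "there exists an ONB not mapped to a basis."
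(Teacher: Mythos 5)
Your proposal is correct and matches the paper exactly: the paper states this corollary with no proof, treating it as the immediate contrapositive of the preceding theorem, which is precisely your argument. The extra case split for non-Schauder operators is harmless but unnecessary, since the plain contrapositive already covers it.
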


By the theorem 1 of \cite{Ole}, a generating operator never be invertible. Hence we have
\begin{corollary}
For a generating operator $T$, there is some ONB $\{e_{n}\}_{n=1}^{\infty}$ such that the
sequence  $\{Te_{n}\}_{n=1}^{\infty}$ is not a basis.
\end{corollary}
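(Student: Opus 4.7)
The plan is to chain together two facts that are already on the table: the first corollary in this subsection (the non-invertible case of the preceding theorem) and Olevskii's Theorem 1 from \cite{Ole}. The preceding corollary asserts that whenever an operator fails to be invertible, there exists at least one ONB whose image fails to be a basis. So the entire argument reduces to verifying that a generating operator is not invertible, and then quoting the previous corollary.

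First I would recall the definition: a generating operator, in the sense of \cite{Ole}, is one that maps some ONB into a quasinormal conditional basis. In particular the image basis is not unconditional. Theorem 1 of \cite{Ole} asserts precisely that such an operator cannot be invertible; in fact, if $T$ were invertible its image of an ONB would be a Riesz basis (hence unconditional), contradicting the conditionality built into the definition. I would state this invocation explicitly, since the remark just before the corollary already reminds the reader of this fact.

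With non-invertibility in hand, I would apply the preceding corollary to $T$ to produce some ONB $\{e_n\}_{n=1}^\infty$ for which $\{Te_n\}_{n=1}^\infty$ fails to be a basis, finishing the proof. There is no genuine obstacle here — the whole content is the combination of the two cited results — so the only thing to be careful about is not appealing to the hypothesis that the particular ONB witnessing ``generating'' yields a basis (it does) and then being tempted to argue ``every'' ONB works; rather, the corollary only produces \emph{some} ONB, and that is all that is claimed.
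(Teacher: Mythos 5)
Your proposal matches the paper's own argument: the paper derives this corollary in one line by citing Theorem 1 of \cite{Ole} to conclude that a generating operator is never invertible, and then applying the immediately preceding corollary (the contrapositive of the theorem that an operator mapping every ONB into a basis must be invertible). Your additional justification via Riesz bases is exactly the reasoning the paper itself uses later when proving the equivalence of the two definitions of generating operator, so there is nothing to correct.
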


Both the English translation and the review(MR0318848) of the paper \cite{Ole} by A. M. Oleskii  make a pity clerical mistake:

Review(MR0318848):``The author obtains a spectral characterization for the linear operators that transform $\mathbf{every}$ complete
orthonormal system into a conditional basis in a Hilbert space.''

The English translation: ``Definition. A bounded noninvertible linear operator $T: \mathcal{H} \rightarrow \mathcal{H}$ is said to be generating if it maps
$\mathbf{every}$ orthonormal basis $\varphi$ into a quasinormed basis $\psi$.''

The word ``every'' should be ``some'' in both of them.
Note that in the proof of the theorem 1 (\cite{Ole}), Olevskii had shown that an operator never can maps every ONB into
a conditional basis. Even the theorem 1 of \cite{Ole} itself shows it, but need a little operator theory discussion.

Since in the Hilbert space $\mathcal{H}$ all quasinormal unconditional bases are equivalent(cf, Theorem 18.1, \cite{Singer}, p529)
 and in addition with theorem \ref{BPS}, we have
\begin{proposition}\label{RM}
An $\omega\times \omega$ matrix $F$ is a Riesz matrix if and only if it represents an invertible operator.
\end{proposition}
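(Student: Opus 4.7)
The plan is to reduce both directions of the equivalence to Theorem \ref{BPS}(4), which says that two Schauder matrices are equivalent if and only if they differ by an invertible matrix on the left. The key observation is that a fixed ONB $\{e_n\}_{n=1}^{\infty}$ has the identity matrix $I$ as its Schauder matrix, so ``equivalent to $I$'' exactly means ``equivalent (as a basis) to the given ONB''. Combined with the cited Singer result (Theorem 18.1 of \cite{Singer}) that every quasinormal unconditional basis of $\mathcal{H}$ is equivalent to any ONB, the notion of Riesz matrix coincides with ``Schauder matrix equivalent to $I$''.

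For the forward direction, suppose $F$ is a Riesz matrix, so its column sequence $\{f_n\}_{n=1}^{\infty}$ is a Riesz basis, i.e.\ equivalent to some ONB. By the Singer theorem it is then equivalent to the distinguished ONB $\{e_n\}_{n=1}^{\infty}$ whose Schauder matrix is $I$. Applying Theorem \ref{BPS}(4) to the pair $F$ and $I$, there exists an invertible matrix $X$ with $XI=F$, whence $F=X$ is invertible and hence represents an invertible operator.

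For the converse, assume $F$ represents an invertible operator. The identity $I$ is trivially a Schauder matrix (columns form an ONB), and from $F=FI$ together with the invertibility of $F$ we may apply Theorem \ref{BPS}(1) (or equivalently part 4) to conclude that $F$ itself is a Schauder matrix equivalent to $I$. Thus the column sequence of $F$ is a Schauder basis equivalent to an ONB, which is precisely the definition of a Riesz basis, so $F$ is a Riesz matrix.

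The only potentially delicate point is to make sure the notion of ``Riesz basis'' being used matches the ``equivalent to an ONB'' characterization — once that is in hand (via the quoted Singer theorem on quasinormal unconditional bases), everything reduces to part 4 of Theorem \ref{BPS}, and there is no real computational obstacle. The proof is essentially a translation between the matrix/operator language and the basis language already set up earlier in the paper.
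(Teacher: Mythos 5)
Your proposal is correct and follows essentially the same route as the paper, which gives no written-out proof but simply derives the proposition from Singer's Theorem 18.1 (equivalence of all quasinormal unconditional bases) together with Theorem \ref{BPS}; your argument just makes explicit the application of part 4 of that theorem with the identity matrix playing the role of the Schauder matrix of the reference ONB. The one point you rightly flag --- reconciling ``Riesz basis'' with ``basis equivalent to an ONB'' --- is exactly the role the cited Singer theorem plays in the paper's one-line derivation.
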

Above result also can be obtained directly form theorem 2 of the paper \cite{J-M}.

\begin{corollary}
An operator $T$ is invertible if and only if there is some ONB such that the matrix $F$ under this ONB of $T$
is a Riesz matrix.
\end{corollary}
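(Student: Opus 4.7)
The plan is to derive this corollary as a direct restatement of Proposition \ref{RM}, which already characterizes Riesz matrices as exactly those $\omega\times\omega$ matrices representing invertible operators. Since ``being a Riesz matrix'' is defined for matrices while ``being invertible'' is a property of the operator, the only content here is translating freely between $T$ and its matrix representation under a chosen ONB.

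For the forward direction, suppose $T$ is invertible. Fix any ONB $\{e_n\}_{n=1}^\infty$ of $\mathcal{H}$ (for instance the one named in the statement) and let $F$ be the matrix of $T$ under this ONB. Then $F$ represents the invertible operator $T$, so Proposition \ref{RM} immediately gives that $F$ is a Riesz matrix. In particular there exists an ONB (in fact any one) for which this holds.

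For the converse, suppose there is some ONB $\{e_n\}_{n=1}^\infty$ such that the matrix $F$ of $T$ under it is a Riesz matrix. By Proposition \ref{RM}, $F$ represents an invertible operator. Because passing between $T$ and $F$ via a fixed ONB is the standard operator-matrix correspondence (a unitary identification between $\mathcal{H}$ and $\ell^2$), the operator represented by $F$ is precisely $T$, and invertibility is preserved under this identification. Hence $T$ is invertible.

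The step that would be the main obstacle if this were being proved from scratch is exactly Proposition \ref{RM}, which combines the fact that all quasinormal unconditional bases in $\mathcal{H}$ are equivalent (Singer, Theorem 18.1) with Theorem \ref{BPS}; but since that proposition is already available to us, the corollary is essentially a two-line translation and no real obstacle remains.
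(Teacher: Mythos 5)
Your proof is correct and matches the paper's (implicit) reasoning: the paper states this corollary without proof, treating it exactly as you do --- an immediate translation of Proposition \ref{RM} through the standard operator--matrix correspondence under a fixed ONB. Nothing further is needed.
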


\begin{corollary}\label{Corollary: Matrix of Invertible Op}
For an invertible operator $T$, its matrix always be a Riesz matrix under any ONB.
\end{corollary}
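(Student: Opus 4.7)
The plan is to observe that the Corollary is essentially a direct consequence of Proposition \ref{RM}, once we notice that the reference ONB in Proposition \ref{RM} is arbitrary. Namely, Proposition \ref{RM} asserts that, with respect to any fixed ONB $\{e_n\}$ used to set up the matrix/operator identification, an $\omega\times\omega$ matrix $F$ is a Riesz matrix if and only if the operator $T_F$ it represents is invertible. So the content to exploit is the ``only if'' direction of Proposition \ref{RM}, applied not to one distinguished ONB but to every ONB.

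Concretely, I would proceed as follows. Let $T$ be an invertible operator on $\mathcal{H}$ and fix an arbitrary ONB $\{e_n\}_{n=1}^{\infty}$. Let $F=(f_{ij})$ denote the matrix of $T$ with respect to this ONB, so that $Te_n=\sum_{i}f_{in}e_i$ and the $n$-th column of $F$ is precisely the $l^{2}$-sequence of coordinates of $Te_n$. Under the identification of matrices and operators with respect to $\{e_n\}$, the matrix $F$ represents the invertible operator $T$. Proposition \ref{RM} then applies to $F$ and yields that $F$ is a Riesz matrix, i.e., the column sequence $\{Te_n\}_{n=1}^{\infty}$ is a Riesz basis of $\mathcal{H}$. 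Since $\{e_n\}$ was an arbitrary ONB, the matrix of $T$ under any ONB is a Riesz matrix.

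There is no real obstacle beyond making this arbitrariness explicit: the only point to be careful about is that Proposition \ref{RM}, although stated after choosing a reference ONB at the start of Section 2, does not privilege any particular ONB, because its characterization is intrinsic to the operator $T_F$. Once this is noted, the corollary is immediate and requires no further computation.
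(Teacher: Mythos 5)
Your argument matches the paper's (implicit) one exactly: the corollary is stated without proof as an immediate consequence of Proposition \ref{RM}, applied to the matrix of $T$ under an arbitrary ONB, which is precisely what you do. One tiny labelling slip: the implication you actually need and use (represents an invertible operator $\Rightarrow$ Riesz matrix) is the ``if'' direction of Proposition \ref{RM}, not the ``only if'' direction, but this does not affect the correctness of your proof.
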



\subsection{}Conditional and unconditional bases have very different behaviors. On the other side,
properties of operators given by Schauder matrices are strongly dependent on the related bases. Both the theorem 1 of
the paper \cite{Ole} and the behaviors of Riesz matrix(cf, proposition \ref{RM}) support this observation.
In this subsection, we give a same
classification of operators dependent on their matrix representation(Or equivalently, on their actions on ONBs).
And then we give some more remarks on Olevskii's paper.
\begin{definition}\label{CUOP}
A Schauder operator $T$ will be called a conditional operator if and only if there is some ONB $\{e_{n}\}_{n=1}^{\infty}$
such the column vector sequence of its matrix representation $F$ of $T$ under the ONB comprise a conditional basis. Otherwise,
$T$ will be called a unconditional operator.
\end{definition}

By the theorem \ref{BPSO}, we have
\begin{corollary}
A Schauder operator $T$ is conditional if and only if it maps some ONB $\{e_{n}\}_{n=1}^{\infty}$ into a conditional basis
$\{Te_{n}\}_{n=1}^{\infty}$.
\end{corollary}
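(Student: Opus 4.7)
The plan is to unravel the definitions and lean on the equivalence $1 \Leftrightarrow 2$ of Theorem~\ref{BPSO}. The key preliminary observation is that under any fixed ONB $\{e_{n}\}_{n=1}^{\infty}$, the $n$-th column of the matrix representation $F$ of $T$ is exactly the sequence $\bigl((Te_{n},e_{k})\bigr)_{k=1}^{\infty}$ of coefficients of $Te_{n}$ with respect to $\{e_{n}\}$. Via the canonical identification of $\ell^{2}$-sequences with vectors of $\mathcal{H}$ through this ONB, the $n$-th column vector is simply $Te_{n}$ itself. Thus the phrase ``the column vector sequence of $F$ forms a (conditional) basis'' in Definition~\ref{CUOP} is a matrix-language restatement of ``$\{Te_{n}\}_{n=1}^{\infty}$ is a (conditional) basis.''

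For the forward implication, I would assume $T$ is a conditional operator. By Definition~\ref{CUOP}, there exists an ONB $\{e_{n}\}$ such that the column vectors of the corresponding matrix $F$ comprise a conditional basis of $\mathcal{H}$. Applying the identification from the first paragraph, this column vector sequence is $\{Te_{n}\}_{n=1}^{\infty}$, which is therefore a conditional basis. Hence $T$ maps this ONB into a conditional basis.

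For the reverse implication, I would assume that $T$ maps some ONB $\{e_{n}\}$ into a conditional basis $\{Te_{n}\}_{n=1}^{\infty}$. In particular $\{Te_{n}\}$ is a Schauder basis, so by the equivalence $2 \Rightarrow 1$ of Theorem~\ref{BPSO}, $T$ is a Schauder operator, which is the prerequisite in Definition~\ref{CUOP}. Again via the identification of columns of $F$ with the vectors $Te_{n}$, the column vector sequence of $F$ comprises a conditional basis, so $T$ is a conditional operator by Definition~\ref{CUOP}.

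There is no real obstacle here: the statement is a direct corollary obtained by translating between the matrix formulation used in Definition~\ref{CUOP} and the operator-action formulation in the corollary, combined with one application of Theorem~\ref{BPSO} to guarantee that the hypothesis ``maps some ONB into a conditional basis'' automatically gives us a Schauder operator. The only point deserving explicit mention is the column-vs-image identification, since the whole corollary is essentially a reminder that this identification preserves the adjective ``conditional.''
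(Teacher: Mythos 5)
Your proposal is correct and follows essentially the same route as the paper, which states this corollary as an immediate consequence of Definition~\ref{CUOP} and Theorem~\ref{BPSO} without writing out a proof. Your explicit observation that the $n$-th column of $F$ is the coefficient sequence of $Te_{n}$ under the chosen ONB, and hence identifies with $Te_{n}$ itself, is precisely the translation the paper leaves implicit.
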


For convenience, we correct the error appearing in the translation and rewrite Olevskii's definition as follows:
\begin{definition}
A bounded operator $T \in \mathcal{L}(\mathcal{H})$ is said to be generating if and only if it maps some ONB into a quasinormal conditional
basis.
\end{definition}
Above definition modifies slightly from the original form on the Olevskii's paper. We write down the original one
to compare them in details:
\begin{definition}(\cite{Ole}, p476)
A bounded non-invertible operator $T: \mathcal{H} \rightarrow \mathcal{H}$ is said to be generating if and only if it maps
some ONB into a quasinormal basis.
\end{definition}

\begin{proposition}
Above two definitions are equivalent.
\end{proposition}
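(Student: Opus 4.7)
The plan is to verify each implication separately, leveraging three facts already established in the paper: Corollary~\ref{Corollary: Matrix of Invertible Op} (an invertible operator's matrix is a Riesz matrix under every ONB), Proposition~\ref{RM} (a Riesz matrix represents an invertible operator), and the Singer equivalence result for quasinormal unconditional bases cited just above Proposition~\ref{RM}.

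For the implication ``modified $\Rightarrow$ original,'' suppose $T$ maps some ONB into a quasinormal conditional basis. Then $T$ trivially maps some ONB into a quasinormal basis, so the only real content is non-invertibility. If $T$ were invertible, Corollary~\ref{Corollary: Matrix of Invertible Op} would force the matrix of $T$ under every ONB to be a Riesz matrix, so the image of every ONB would be a Riesz basis, hence unconditional. This would contradict the assumed existence of an ONB whose image is conditional, and therefore $T$ must be non-invertible, which is exactly the extra hypothesis in Olevskii's original definition.

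For the implication ``original $\Rightarrow$ modified,'' assume $T$ is a bounded non-invertible operator mapping some ONB $\{e_n\}_{n=1}^{\infty}$ to a quasinormal basis $\{T e_n\}_{n=1}^{\infty}$. The task is to upgrade ``basis'' to ``conditional basis.'' Suppose for contradiction that $\{T e_n\}$ is unconditional; then it is a quasinormal unconditional basis, and by the Singer equivalence theorem all such bases in $\mathcal{H}$ are mutually equivalent, in particular equivalent to the ONB $\{e_n\}$ itself. This makes $\{T e_n\}$ a Riesz basis, so the matrix of $T$ under $\{e_n\}$ is a Riesz matrix; Proposition~\ref{RM} then forces $T$ to be invertible, contradicting the hypothesis. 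Hence $\{T e_n\}$ must be conditional, exhibiting $T$ as generating in the modified sense.

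The only step requiring genuine thought is the reverse direction, and even there the obstacle is not a calculation but the observation that, in Hilbert space, non-invertibility alone is strong enough to prohibit any ONB from being carried onto a quasinormal unconditional basis. Once that chain from Singer's theorem through Proposition~\ref{RM} is in place, the proposition is essentially a bookkeeping reconciliation of the two formulations.
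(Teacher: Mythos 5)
Your proposal is correct and follows essentially the same route as the paper: both directions reduce to Proposition~\ref{RM} (a Riesz matrix represents an invertible operator), with the forward direction using that an invertible operator sends every ONB to a Riesz (hence unconditional) basis, and the reverse direction using Singer's equivalence of quasinormal unconditional bases to upgrade ``unconditional'' to ``Riesz'' and derive a contradiction with non-invertibility. You merely spell out the Singer step that the paper leaves implicit in its phrase ``otherwise $T$ must be invertible again by proposition~\ref{RM}.''
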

\begin{proof}
If a bounded operator $T \in \mathcal{L}(\mathcal{H})$ maps some ONB into a quasinormal conditional
basis, then it must be non-invertible since an invertible operator maps each ONB into a Riesz basis(hence a unconditional basis) by
proposition \ref{RM};
On the other side, If a bounded non-invertible operator $T: \mathcal{H} \rightarrow \mathcal{H}$  maps
some ONB into a quasinormal basis. Then the quasinormal basis must be a conditional one otherwise $T$ must be invertible again by
proposition \ref{RM}.
\end{proof}

\begin{corollary}
A generating operator is a conditional operator; An invertible operator is a unconditional operator.
\end{corollary}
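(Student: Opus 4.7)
The plan is essentially to unwrap the definitions and invoke results already established in the section, since both assertions follow with very little additional work.

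For the first claim, I would argue as follows. Let $T$ be a generating operator. By the corrected definition (together with the preceding proposition that shows equivalence with Olevskii's original formulation), there is some ONB $\{e_n\}_{n=1}^{\infty}$ such that $\{Te_n\}_{n=1}^{\infty}$ is a quasinormal conditional basis. A quasinormal basis is still a basis, and in particular a conditional basis, since the quasinormal property is only a two-sided bound on norms and does not affect whether the basis is conditional or unconditional. Therefore $T$ is in particular a Schauder operator (by Theorem \ref{BPSO}, since it maps an ONB into a basis), and the same ONB witnesses that $T$ satisfies the condition in Definition \ref{CUOP}. By the corollary immediately following Definition \ref{CUOP}, $T$ is a conditional operator.

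For the second claim, let $T$ be invertible. By Corollary \ref{Corollary: Matrix of Invertible Op}, for \emph{every} ONB $\{e_n\}_{n=1}^{\infty}$ the matrix representation of $T$ under that ONB is a Riesz matrix; equivalently, $\{Te_n\}_{n=1}^{\infty}$ is a Riesz basis. Since every Riesz basis is unconditional, it follows that \emph{no} ONB is mapped by $T$ into a conditional basis. In particular $T$ is a Schauder operator, so Definition \ref{CUOP} applies, and by its "otherwise" clause $T$ is an unconditional operator.

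The only place where one has to be slightly careful is verifying the Schauder-operator prerequisite implicit in Definition \ref{CUOP}, but this is immediate in both cases: for a generating operator it is built into the definition, and for an invertible operator it is furnished by Corollary \ref{Corollary: Matrix of Invertible Op}. There is no substantive obstacle.
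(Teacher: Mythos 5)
Your proposal is correct and follows exactly the route the paper intends: the paper states this corollary without proof, as an immediate consequence of the equivalence of the two definitions of generating operator, Proposition \ref{RM} (equivalently Corollary \ref{Corollary: Matrix of Invertible Op}), and Definition \ref{CUOP}, which is precisely how you argue. Your extra care about the Schauder-operator prerequisite in the "otherwise" clause is a sensible addition but introduces no new ideas beyond the paper's implicit reasoning.
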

\section{ A Criterion for Operators to be Conditional}
\subsection{}{\bf Question: \ } Is $K=diag\{1,\frac{1}{2},\frac{1}{3},\cdots\}$ a conditional operator?

From Olevskii's result, we can not obtain the confirm answer. In this section, we will improve the Olevskii's technology and gain a confirm answer.

First, let us recall some notations in the line of Olevskii.

Let $A_k=\left(
            \begin{array}{c}
              a_{ij} \\
            \end{array}
          \right)
\in M_{2^k}(\mathbb{C})$ (where $1\leq i,j\leq 2^k$) be defined as follows: $a_{i1}=2^{-\frac{k}{2}}, 1\leq i\leq 2^k$; and if $j=2^s+v(1\leq v\leq 2^s)$, then

\begin{align*}
a_{ij}&=\left\{
\begin{array}{ll}2^{\frac{s-k}{2}},\hspace{10mm}  (v-1)2^{k-s}< i\leq (2v-1)2^{k-s-1},
\\[2mm]
-2^{\frac{s-k}{2}}, \hspace{10mm}(2v-1)2^{k-s-1}< i\leq v2^{k-s}.\\
\end{array}\right.
\end{align*}

For $\alpha, \frac{1}{\sqrt{2}}<\alpha<1$, let $T_{(k,\alpha)}\in M_{2^k}(\mathbb{C})$ be defined as follows:
$$T_{(k,\alpha)}=\begin{bmatrix}\begin{bmatrix}                     \alpha^k &  \\
                                                                         & \alpha^k \\
                                                                     \end{bmatrix}
                                                                      \\
                                                                    & \begin{bmatrix}
                                                                          \alpha^{k-1}  \\
                                                                           & \alpha^{k-1} \\
                                                                        \end{bmatrix}  &  \\
                                                                       &  & \ddots &  \\
                                                                       &  &  & \begin{bmatrix}
                                                                                    \alpha & \\
                                                                                     & \ddots \\
                                                                                     &&\alpha
                                                                                  \end{bmatrix} _{2^{k-1}\times2^{k-1}}
                                                                     \\
                                                                 \end{bmatrix}.
$$

In this section, we will show that if the positive operator $T$ does not admit the eigenvalue zero and $\sigma(T)$ has a decreasing sequence $\{\lambda_n,n=1,2,\ldots\}$ which converges to zero and $$\lim\limits_{n\rightarrow\infty}\frac{\lambda_n}{\lambda_{n+1}}=1,$$ then $T$ must be a conditional operator.
Thus the compact operator $K=diag\{1,\frac{1}{2},\frac{1}{3}$, $\cdots\}$ is a conditional operator.

\subsection{} Now, we give a key lemma.

\begin{lemma}\label{keylemma}
Let $T$ be a diagonal operator with entries $\{
\lambda_1, \lambda_2, \lambda_3,\ldots\}$ under the ONB $\{e_{k}\}_{k=1}^{\infty}$, where $\lambda_n>0$. Given $\alpha$, $\frac{1}{\sqrt{2}}<\alpha<1$.
If for each $k\geq1$, there exist positive numbers $c_k\leq d_k$, such that

a) $sup_k\frac{d_k}{c_k}<\infty$,

b) there exists subset $\triangle_k=\{n^k_1,n^k_2,\cdots,n^k_{2^k}\}$ of $\mathbb{N}$ such that $c_k\leq\frac{\alpha^k}{\lambda_{n^k_{2^k-1}}},\frac{\alpha^k}{\lambda_{n^k_{2^k}}} \leq d_k$, and $c_k\leq\frac{\alpha^j}{\lambda_{n^k_i}}\leq d_k$ when $1\leq j\leq k-1,~2^k(1-\frac{1}{2^{j-1}})+1\leq i\leq2^k(1-\frac{1}{2^{j}}),
$

c) $max{\triangle_k} < min{\triangle_{k'}}$ when $k< k'$,

then $T$ is a conditional operator.
\end{lemma}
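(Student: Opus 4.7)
The plan is to construct an ONB $\{\tilde e_n\}$ of $\mathcal H$ such that $\{T\tilde e_n\}$ is a Schauder basis that is not unconditional; conditionality of $T$ as an operator then follows. The construction is block-diagonal: within each $2^k$-dimensional subspace $\mathcal H_k=\mathrm{span}\{e_n:n\in\Delta_k\}$ apply Olevskii's Haar-type matrix $A_k$, and for $n\notin\bigcup_k\Delta_k$ set $\tilde e_n=e_n$. Condition (c) guarantees that these blocks can be concatenated in increasing $k$-order into a single ONB. To match the indexing in condition (b) (which enumerates the $\alpha$-block first and the $\alpha^k$-block last) with $T_{(k,\alpha)}$ (which starts with $\alpha^k$), I would relabel the $j$-th new coordinate of $\mathcal H_k$ as $n_{2^k+1-j}^k$. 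Under this labeling, $T|_{\mathcal H_k}$ becomes a positive diagonal $D_k'$, and condition (b) says precisely that $E_k:=(c_kD_k')\cdot T_{(k,\alpha)}^{-1}$ has all diagonal entries in the interval $[1/M,1]$, where $M:=\sup_k d_k/c_k<\infty$ by condition (a). Thus $E_k$ is a bounded, boundedly invertible diagonal, uniformly in $k$.

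The matrix $F$ of $T$ in this new ONB is block-diagonal with $k$-th block $D_k'A_k=c_k^{-1}E_k\,T_{(k,\alpha)}A_k$. To show $\{T\tilde e_n\}$ is a Schauder basis I would verify the three properties of Lemma \ref{Matrix Form}. Columns are in $\ell^2$ because $T$ is bounded. A left inverse $G^\ast$ with $\ell^2$ rows is assembled blockwise from $A_k^{-1}(D_k')^{-1}$ (the $c_k$ prefactors cancel out in the product $FG^\ast=I$). Uniform boundedness of the partial-sum projections $Q_n$ reduces, via condition (c), to a uniform bound on the basis constants of the finite bases $D_k'A_k$; by Theorem \ref{BPS}(1) this basis constant is at most $\|E_k\|\cdot\|E_k^{-1}\|$ times the basis constant of the exact Olevskii matrix $T_{(k,\alpha)}A_k$, which is known and uniformly bounded in $k$.

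To prove that $F$ is conditional, by Proposition \ref{Proposition: Unconditional Matrix} it suffices to exhibit natural projections $Q_{\Sigma_k}$ with $\|Q_{\Sigma_k}\|\to\infty$. This is where the heart of Olevskii's argument enters: from the cancellation structure of $A_k$, the geometric decay $\alpha^j$ and the hypothesis $\alpha>1/\sqrt 2$, one can produce a subset $\Sigma_k\subseteq\{1,\dots,2^k\}$ such that the natural projection of the finite basis $T_{(k,\alpha)}A_k$ onto the columns indexed by $\Sigma_k$ has norm growing without bound in $k$. Since the $k$-th block of $F$ is $E_k$ times $T_{(k,\alpha)}A_k$ with $E_k$ bounded above and below uniformly, Theorem \ref{BPS}(1) transfers this growth, up to a factor at most $M^2$, to the corresponding natural projection of $F$ onto $\{n_{2^k+1-j}^k:j\in\Sigma_k\}\subset\Delta_k$. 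Letting $k\to\infty$ gives the required divergence.

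The main obstacle is the \emph{perturbation stability} of Olevskii's two quantitative estimates: the uniform basis-constant bound needed in the second paragraph and the divergent lower bound needed in the third. Both are proved for the exact diagonal $T_{(k,\alpha)}$, and both must survive replacement of each $\alpha^j$ by an entry in $[\alpha^j/M,\,\alpha^j]$. The left-multiplication viewpoint, combined with Theorem \ref{BPS}(1), packages this perturbation cleanly, so the hardest piece is really the verification of Olevskii's original lower bound on $\|Q_{\Sigma_k}\|$ for $T_{(k,\alpha)}A_k$ itself; once granted, the rest is the routine gluing of block-diagonal data via condition (c) and Lemma \ref{Matrix Form}.
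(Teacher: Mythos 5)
Your proposal is correct and follows essentially the same route as the paper: decompose $\mathcal H$ into blocks aligned with the $\Delta_k$, use conditions (a) and (b) to factor the restriction of $T$ as a uniformly invertible diagonal (your $E_k$, the paper's $X_k$) times $c_k^{-1}T_{(k,\alpha)}$, rotate by the Haar-type unitaries $A_k^*$, and invoke Olevskii's result that $\oplus_k T_{(k,\alpha)}A_k^*$ is a quasinormal conditional basis matrix, transferring both the Schauder property and the divergent unconditional constant through Theorem \ref{BPS}. The only differences are cosmetic (you fix the coordinates outside $\bigcup_k\Delta_k$ instead of absorbing them into the $S_k$ blocks, and you make the perturbation-stability bookkeeping slightly more explicit).
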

\begin{proof}
In this proof, we shall
identify the operators and the $\omega \times \omega$ matrix representation of the operators under ONB $\{e_{k}\}_{k=1}^{\infty}$.

We rearrange $n^k_1,n^k_2,\cdots,n^k_{2^k}$ into a increasing sequence and denote it by $m^k_1,m^k_2$, $\cdots,m^k_{2^k}$ ($m^k_1<m^k_2<\cdots<m^k_{2^k}$). Let $m^0_1=1$ and $\mathcal {H}_k=span\{e_{m^k_1},e_{m^k_1+1}$, $\ldots,e_{m^{k+1}_1-1}\}$ for $k\geq0$, then since $max{\triangle_k} < min{\triangle_{k'}}$ when $k< k'$, we know $\mathcal {H}_k\cap \mathcal {H}_{k'}=(0)$ when $k\neq k'$ and $\oplus_{k\geq0}\mathcal {H}_k=\mathcal {H}$. Moreover, $\{\lambda_{n^k_1},\lambda_{n^k_2},\ldots,\lambda_{n^k_{2^k}}\}\subseteq\{\lambda_{m^k_1}, \lambda_{m^k_1+1},\ldots,$ $\lambda_{m^{k+1}_1-1}\}$ for any $k\geq1$.

Let $T_k\in \mathcal {L}(\mathcal {H}_k)$ the k-th block of $T$ on $\mathcal {H}_k$, i.e. $$T_k=\begin{bmatrix}
                    \lambda_{m^k_1} &  &  &  \\
                     & \lambda_{m^k_1+1} &  &  \\
                     & & \ddots\\
                     &  & & \lambda_{m^{k+1}_1-1} \\
                  \end{bmatrix}\begin{matrix}e_{m^k_1}\\
                e_{m^k_1+1}\\
                \vdots\\
                e_{m^{k+1}_1-1}\end{matrix},$$ then $\oplus_{k\geq0}T_k=T$.
 Denote  $\widetilde{T}_0=T_0$. For $k\geq1$, let $$\widetilde{T}_k=\begin{bmatrix}
                    \lambda_{n^k_{2^k}} &  &  &  \\
                     & \lambda_{n^k_{2^{k}-1}} &  &  \\
                     & & \ddots\\
                     &  & & \lambda_{n^k_1}& \\
                     &  &  & & S_k \\
                  \end{bmatrix}\begin{matrix}e_{m^k_1}\\
                e_{m^k_1+1}\\
                \vdots\\
                e_{m^{k}_1+2^k-1}\\
                \widetilde{\mathcal{H}}_k\end{matrix},
$$  where $\widetilde{\mathcal{H}}_k=\bigvee\{e_{m^{k}_1+2^k},\ldots, e_{m^{k+1}_1-1}\}$ and $S_k$ is a diagonal operator with entries $\{\lambda_{m^k_1}, \lambda_{m^k_1+1},\ldots,$ $\lambda_{m^{k+1}_1-1}\}\backslash\{\lambda_{n^k_1},\lambda_{n^k_2}$, $\ldots,\lambda_{n^k_{2^k}}\}$.  It is easy to see that the entries of $\widetilde{T}_k$ are just a rearrangement
 of entries of $T_k$ for $k\geq1$.

We will prove  $\widetilde{T}\triangleq\oplus_{k\geq0}\widetilde{T}_k$  is a conditional operator and then show $T$ is a conditional operator.

Let $X_0=I\in\mathcal{L}(\mathcal {H}_0)$. For $k\geq1$, let $$X_k=\begin{bmatrix}
              c_k\cdot\begin{bmatrix}
             \frac{\lambda_{n^k_{2^k}}}{\alpha^k} &  &  \\
              &\frac{\lambda_{n^k_{2^k-1}}}{\alpha^k}\\
              && \ddots &  \\
              &&&\frac{\lambda_{n^k_i}}{\alpha^j}\\
              &&&&\ddots\\
              && && & \frac{\lambda_{n^k_1}}{\alpha} \\
           \end{bmatrix} &  \\
               & I \\
            \end{bmatrix}\in \mathcal{L}(\mathcal {H}_k),
$$ since $$Sup_k max\{c_k\frac{\lambda_{n^k_{2^k}}}{\alpha^k},\ldots,c_k\frac{\lambda_{n^k_1}}{\alpha},c_k^{-1}\frac{\alpha^k}{\lambda_{n^k_{2^k}}},\ldots,c_k^{-1}\frac{\alpha}{\lambda_{n^k_1}}\}\leq Sup_k max\{1,\frac{d_k}{c_k}\}<\infty,$$ we have $X\triangleq\oplus_{k\geq0}X_k$ is an invertible operator.

Moreover for $k\geq1$, $$\widetilde{T}_k=X_k\cdot\begin{bmatrix}
                            T_{(k,\alpha)}c_k^{-1} &  \\
                             & S_k \\
                          \end{bmatrix},
$$ so  $$\widetilde{T}=\oplus_{k\geq0}\widetilde{T}_k=X\cdot\oplus_{k\geq0}\begin{bmatrix}
                            T_{(k,\alpha)}c_k^{-1} &  \\
                             & S_k \\
                          \end{bmatrix},$$ where we denote $\begin{bmatrix}
                            T_{(k,\alpha)}c_k^{-1} &  \\
                             & S_k \\
                          \end{bmatrix}$ by $\widetilde{T}_0$ when $k=0$.

Let $$U=\oplus_{k\geq0}\begin{bmatrix}
                    A_k^* &  \\
                     & I \\
                  \end{bmatrix}
,$$ where we denote $\begin{bmatrix}
                    A_k^* &  \\
                     & I \\
                  \end{bmatrix}=I\in\mathcal{L}(\mathcal {H}_0)$ when $k=0$, then it is an unitary operator and
\begin{eqnarray*}\widetilde{T}U&=&X\cdot\oplus_{k\geq0}\begin{bmatrix}
                            T_{(k,\alpha)}c_k^{-1} &  \\
                             & S_k \\
                          \end{bmatrix}\cdot \oplus_{k\geq0}\begin{bmatrix}
                    A_k^* &  \\
                     & I \\
                  \end{bmatrix}\\
                          &=&X\cdot\oplus_{k\geq0}\begin{bmatrix}
                            T_{(k,\alpha)} A_k^* c_k^{-1} &  \\
                             & S_k \\
                          \end{bmatrix} \\
                          &=& X\cdot\oplus_{k\geq0}\begin{bmatrix}
                            T_{(k,\alpha)} A_k^*  &  \\
                             & S_k \\
                          \end{bmatrix}\cdot\oplus_{k\geq0}\begin{bmatrix}
                           c_k^{-1}I &  \\
                             & I \\
                          \end{bmatrix}.\end{eqnarray*}

To show $\widetilde{T}$ is conditional, from theorem \ref{BPS}, it suffices to  show that $$F\triangleq\oplus_{k\geq0}\begin{bmatrix}
                            T_{(k,\alpha)} A_k^*  &  \\
                             & S_k \\
                          \end{bmatrix}$$ is a conditional matrix.

We will deal with it by theorem \ref{Theorem: Schauder Matrix} and proposition \ref{Proposition: Unconditional Matrix}. First, one can easily see that $F$ has an unique left inverse matrix $$G^*=\oplus_{k\geq0}\begin{bmatrix}
                           A_k T_{(k,\alpha)}^{-1} &  \\
                             & S_k^{-1} \\
                          \end{bmatrix}$$ where each row  is  a $l^2-$ sequence.

Second,  $Q_n=FP_nG^*$ are obviously projections. Let \begin{eqnarray*}&&\Lambda_1=\{m^k_1,m^k_1+1,\ldots,m^k_1+2^k-1;~k\geq1\}\subseteq\mathbb{N},\\
&&\Lambda_2=\{m^k_1+2^k,m^k_1+2^k+1,\ldots,m^{k+1}_1-1;~k\geq1\}\subseteq\mathbb{N}.\end{eqnarray*}    For any $x\in\mathcal {H}$, we have $$x=\sum\limits_{j=1}^{\infty}x_je_j=\sum\limits_{j\in\Lambda_1}x_je_j+\sum\limits_{j\in\Lambda_2}x_je_j,$$ and \begin{eqnarray*}&FP_nG^*(x)\\
                        &=FP_nG^*(\sum\limits_{j\in\Lambda_1}x_je_j+\sum\limits_{j\in\Lambda_2}x_je_j)\\
                        &=(\oplus_{k\geq0} T_{(k,\alpha)} A_k^*)P^{(1)}_n(\oplus_{k\geq0} A_k T_{(k,\alpha)}^{-1})(\sum\limits_{j\in\Lambda_1}x_je_j)+P^{(2)}_n(\sum\limits_{j\in\Lambda_2}x_je_j),\end{eqnarray*} where $\oplus_{k\geq0} T_{(k,\alpha)} A_k^*$ and $P^{(1)}_n$ are the operators on $\mathcal {H}^{(1)}=\bigvee_{j\in\Lambda_1}\{e_j\}$, $P^{(1)}_n$ converges to $I$ in the strong operator topology; $P^{(2)}_n$ is the operator on $\mathcal {H}^{(2)}=\bigvee_{j\in\Lambda_2}\{e_j\}$ and also converges to $I$ in the strong operator topology.

It follows from the result of Olevskii that $\oplus_{k\geq0} T_{(k,\alpha)} A_k^*$ is quasinormal conditional matrix. Then from theorem \ref{Theorem: Schauder Matrix}, we have $$\lim\limits_{n\rightarrow\infty}(\oplus_{k\geq0}  T_{(k,\alpha)} A_k^*)P^{(1)}_n(\oplus_{k\geq0}  A_k T_{(k,\alpha)}^{-1})(\sum\limits_{j\in\Lambda_1}x_je_j)=\sum\limits_{j\in\Lambda_1}x_je_j.$$ Thus $FP_nG^*(x)$ converges to $x$ as $n\rightarrow\infty$ and $F$ is a Schauder matrix.

Moreover, since the unconditional basis const of $\oplus_{k\geq0} T_{(k,\alpha)} A_k$  is smaller than the unconditional basis const of $F$ and   the unconditional basis const of $\oplus_{k\geq0} T_{(k,\alpha)} A_k$  is infinity, we have that the unconditional basis const of $F$ is infinity. Thus from proposition \ref{Proposition: Unconditional Matrix}, we know that $F$ is a conditional matrix and $\widetilde{T}U$ is a conditional matrix.

Since the entries of $\widetilde{T}$ is just a rearrangement of $T$, one can easily find an unitary matrix (operator) $\widetilde{U}$ such that $\widetilde{U} \widetilde{T}\widetilde{U}^*=T$, it follows that $\widetilde{U}^*T\widetilde{U}U$ is a conditional matrix. Again from theorem \ref{BPS}, $T\widetilde{U}U$ is a conditional matrix. Thus $T$ is a conditional operator, since it maps orthonormal basis
$\{(\widetilde{U}U)e_1$,$\ldots$,
$(\widetilde{U}U)e_n$,$\ldots\}$ into a conditional basis.
\end{proof}

Now, we come to the main results.

\begin{theorem}\label{4}
Let $T\geq0$ belong to $\mathcal{L}(\mathcal {H})$ which does not admit the eigenvalue zero. If there exists a constant $\delta>1$ such that $$\lim\limits_{t\rightarrow0^+}Card\{[\frac{t}{\delta},t]\cap\sigma(T)\}=\infty,$$ then $T$ is a conditional operator.
\end{theorem}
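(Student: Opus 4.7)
The plan is to verify the hypotheses of Lemma~\ref{keylemma} for a diagonal operator naturally associated with $T$ via the spectral theorem.

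First, I fix any $\alpha\in(1/\sqrt{2},1)$ and choose a rapidly growing sequence of positive integers $M_1<M_2<\cdots$ so that, at level $k$, the number $\alpha^{1+M_k}$ is smaller than the threshold guaranteed by the hypothesis for containing at least $2^{k-1}$ spectral points in each interval of the form $[t/\delta,t]$. Set $c_k:=\alpha^{-M_k}$ and $d_k:=\delta c_k$. Then $\sup_k d_k/c_k=\delta<\infty$, so property~(a) of Lemma~\ref{keylemma} is immediate. Moreover, for each $1\le j\le k$ the interval $[\alpha^{j}/d_k,\alpha^{j}/c_k]=[\alpha^{j+M_k}/\delta,\alpha^{j+M_k}]$ is of the form $[t/\delta,t]$ with $t=\alpha^{j+M_k}\le\alpha^{1+M_k}$, so by the density hypothesis it contains at least $2^{k-j}$ (resp.\ $2$ when $j=k$) points of $\sigma(T)$; by letting $M_k$ grow sufficiently fast I can also arrange the intervals at different levels $k$ to be mutually disjoint, so that all chosen spectral points are distinct.

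Second, I reduce $T$ to a diagonal form to which Lemma~\ref{keylemma} can be applied. For each selected spectral point $\mu$ I pick a small subinterval $J\ni\mu$ with $E(J)\ne 0$ (possible because $\mu\in\sigma(T)$), pairwise disjoint across all selections, and then choose a unit vector $e_i^{(k)}\in E(J_i^{(k)})\mathcal{H}$; this yields an orthonormal family $\{e_i^{(k)}\}_{k,i}$ on which $T$ acts within $|J_i^{(k)}|$ of the scalar $\mu_i^{(k)}$. Extending to a full ONB of $\mathcal{H}$ by first completing inside each $E(J_i^{(k)})\mathcal{H}$ and then inside the orthogonal spectral part (on which $T$ has spectrum bounded away from $0$, hence is invertible and maps any ONB to a Riesz basis by Proposition~\ref{RM} and Corollary~\ref{Corollary: Matrix of Invertible Op}), the matrix of $T$ splits as an invertible summand plus a small-norm perturbation of a positive diagonal operator $D$ whose entries are exactly the chosen $\mu_i^{(k)}$.

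Third, the diagonal entries $\{\lambda_n\}$ of $D$ and the indices $\Delta_k$ corresponding to the positions of the selected points satisfy property~(b) of Lemma~\ref{keylemma} by construction, and property~(c) is enforced by ordering the positions so that $\max\Delta_k<\min\Delta_{k'}$ whenever $k<k'$. Lemma~\ref{keylemma} then produces an ONB that $D$ maps to a conditional basis. Combining this with the Riesz basis coming from the invertible summand (via Theorem~\ref{BPS}) yields a conditional basis for $\mathcal{H}$, and a perturbation argument (stability of Schauder bases under norm-small perturbations, with $|J_i^{(k)}|$ chosen summably small against the biorthogonal functionals of the basis produced by $D$) transfers this property from $D$ to $T$. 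Hence $T$ is a conditional operator.

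The main obstacle is the reduction to exact diagonal form: when $T$ has continuous spectrum, it is not unitarily equivalent to any diagonal operator, so Lemma~\ref{keylemma} does not apply literally, and controlling the off-diagonal remainder in the norms relevant to basis stability (rather than just operator norm) is the delicate step. For $T$ with pure point spectrum accumulating at $0$ this reduction is immediate, since an ONB of eigenvectors realizes $T$ as diagonal and no perturbation is needed.
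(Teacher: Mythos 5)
Your first step (harvesting $2^k$ spectral points at level $k$ from intervals of the form $[t_0\alpha^{j}/\delta,\,t_0\alpha^{j}]$ and taking $c_k$, $d_k$ with $d_k/c_k=\delta$) matches the paper's selection argument. The gap is in your reduction of $T$ to a diagonal operator, and it is a real one rather than a deferrable technicality. Two specific problems. First, the ``orthogonal spectral part'' left over after removing the subspaces $E(J_i^{(k)})\mathcal{H}$ is the spectral subspace of $\sigma(T)\setminus\bigcup J_i^{(k)}$; the hypothesis allows (and typically forces) $\sigma(T)$ to contain many points near $0$ that you did not select, so this leftover part can still have $0$ as an accumulation point of its spectrum. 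It is then not invertible, and your appeal to Proposition~\ref{RM} and Corollary~\ref{Corollary: Matrix of Invertible Op} for that summand fails. Second, the additive perturbation scheme you sketch ($T=D+\text{small}$, then stability of bases against the biorthogonal functionals) is precisely the step you concede you cannot carry out when $T$ has continuous spectrum; moreover the biorthogonal functionals of the conditional basis produced by Lemma~\ref{keylemma} have norms of order $d_k$, which blow up as $k\to\infty$, so ``summably small'' is a genuine quantitative demand that your choice of the $J_i^{(k)}$ has not been shown to meet.

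The paper's proof avoids both problems by comparing $T$ to a diagonal operator multiplicatively rather than additively. It rearranges the selected points into a decreasing sequence $\{\mu_n\}$, cuts each gap $[\mu_{n+1},\mu_n]$ into finitely many bands whose endpoint ratios are bounded by a fixed constant $M>\|T\|/\mu_1$, and writes $T=\oplus_{n,j}T_{(n,j)}$ over the corresponding spectral subspaces. With $S=\oplus_{n,j}\xi_{(n,j)}I_{(n,j)}$ for points $\xi_{(n,j)}$ of the respective bands (arranged so that $\xi_{(n,1)}=\mu_n$), the operator $X=TS^{-1}=\oplus_{n,j}\xi_{(n,j)}^{-1}T_{(n,j)}$ is invertible because each block and its inverse have norm at most $M$. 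The operator $S$ is exactly diagonal no matter how continuous or degenerate the spectrum is on each band, contains the selected $\lambda_n$ among its diagonal entries, and hence falls under Lemma~\ref{keylemma}; conditionality then passes from $S$ to $T=XS$ by Theorem~\ref{BPS}. This factorization is the one idea your proposal is missing; with it, neither the invertibility of the leftover part nor any basis-perturbation estimate is needed.
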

\begin{proof}
First step, we choose a sequence $\{\lambda_n\}\subseteq\sigma(T)$ satisfying the conditions of lemma \ref{keylemma}.  We will find it by induction.

For $k=1$, $\Delta_1=\{\lambda_{1},\lambda_{2}\}\subseteq\sigma(T)$ and $c_1,d_1$ can be easily chosen such that \begin{eqnarray*}\frac{d_1}{c_1}\leq \delta ~{\rm  and }~ c_1\leq\frac{\alpha}{\lambda_{1}},\frac{\alpha}{\lambda_{2}}\leq d_1.\end{eqnarray*}

Suppose we have found $\Delta_{k-1}=\{\lambda_{2^{k-1}-1},\lambda_{2^{k-1}},\lambda_{2^{k-1}+1},\cdots,\lambda_{2^{k}-2}\}\subseteq\sigma(T)$ which satisfies $$\Delta_{k-1}\cap\bigcup_{1\leq j\leq k-2}\Delta_j=\emptyset,$$ and $c_{k-1},d_{k-1}$ such that the first two conditions of lemma \ref{keylemma} are satisfied. Since $$\lim\limits_{t\rightarrow0^+}Card\{[\frac{t}{\delta},t]\cap\sigma(T)\}=\infty,$$  we can find $t_0<min\{\lambda;~\lambda\in\bigcup_{1\leq j\leq k-1}\Delta_j\}$  such that $t\leq t_0$, $$Card\{[\frac{t}{\delta},t]\cap\sigma(T)\}\geq2^k.$$

Choose arbitrary two elements $\{\lambda_{2^{k+1}-3},\lambda_{2^{k+1}-2}\}\subseteq\sigma(T)\cap[\frac{t_0\alpha^k}{\delta},t_0\alpha^k]$, then choose one after one as follows,
\begin{eqnarray*}
&&\{\lambda_{2^{k+1}-5},\lambda_{2^{k+1}-4}\}\subseteq\{\sigma(T)\cap[\frac{t_0\alpha^{k-1}}{\delta},t_0\alpha^{k-1}]\}\backslash\{\lambda_{2^{k+1}-3},\lambda_{2^{k+1}-2}\}\\ &&\hspace{50mm}\vdots\\
&&\{\lambda_{(2^j-1)2^{k-j+1}-1},\lambda_{(2^j-1)2^{k-j+1}},\lambda_{(2^j-1)2^{k-j+1}+1},\ldots,\lambda_{(2^{j+1}-1)2^{k-j}-2}\}\subseteq\{\sigma(T)\\
&&\cap[\frac{t_0\alpha^{j}}{\delta},t_0\alpha^{j}]\}\backslash\{\lambda_{(2^{j+1}-1)2^{k-j}-1},\lambda_{(2^{j+1}-1)2^{k-j}},\lambda_{(2^{j+1}-1)2^{k-j}+1},\ldots,\lambda_{2^{k+1}-2}\}\\
&&\hspace{50mm}\vdots\\
&&\{\lambda_{2^k-1}, \lambda_{2^k},\ldots,\lambda_{3\cdot2^{k-1}-2}\}\subseteq\{\sigma(T)\cap[\frac{t_0\alpha}{\delta},t_0\alpha]\}\backslash\{\lambda_{3\cdot2^{k-1}-1},\lambda_{3\cdot2^{k-1}},
\ldots,\\
&&\lambda_{2^{k+1}-2}\}.\end{eqnarray*}
Since $Card\{[\frac{t_0\alpha^{j}}{\delta},t_0\alpha^{j}]\cap\sigma(T)\}$ is more than $2^k$, the above process is reasonable.

Denote $c_k=t_0^{-1},d_k=\delta t_0^{-1}$, then obviously \begin{eqnarray*}&&c_k\leq\frac{\alpha}{\lambda_{2^k-1}},\ldots,\frac{\alpha}{\lambda_{3\cdot2^{k-1}-2}},\frac{\alpha^2}{\lambda_{3\cdot2^{k-1}-1}},\ldots,\frac{\alpha^2}{\lambda_{7\cdot2^{k-2}-2}},\\
&&\cdots\cdots,\frac{\alpha^{k-1}}{\lambda_{2^{k+1}-5}},\frac{\alpha^{k-1}}{\lambda_{2^{k+1}-4}},\frac{\alpha^k}{\lambda_{2^{k+1}-3}},\frac{\alpha^k}{\lambda_{2^{k+1}-2}} \leq d_k.
\end{eqnarray*}

Thus we have found a sequence $\{\lambda_n\}\subseteq\sigma(T)$ satisfying the conditions of lemma \ref{keylemma}. Obviously, $\lambda_n$ converges to zero as $n\rightarrow\infty$.

Second step, we will complete the proof.

We rearrange the sequence $\{\lambda_n\}\subseteq\sigma(T)$ into a decreasing sequence $\{\mu_n\}$. Fix a constant $M>\frac{||T||}{\mu_1}$.

For $n\geq1$, cut each segment $[\mu_{n+1},\mu_n]$ into smaller subsegments (many enough and we denote them by $[\nu_{m^n_{j+1}},\nu_{m^n_{j}}],~1\leq j\leq k(n)-1$, $\nu_{m^n_{1}}=\mu_n$, $\nu_{m^n_{k(n)}}=\mu_{n+1}$) in order that $$\dfrac{\nu_{m^n_{j}}}{\nu_{m^n_{j+1}}}\leq M,~1\leq j\leq k(n)-1,n=1,2,\ldots.$$

From the spectral decompose theorem of self-adjoint operator, we have $$T=\oplus_{n\geq0}\oplus_{1\leq j\leq k(n)-1} T_{(n,j)},$$ where $T_{(n,j)}$ is the operator on the subspace $\mathcal {H}_{(n,j)}$ corresponding to $[\nu_{m^n_{j+1}},\nu_{m^n_{j}}]\cap\sigma(T)$ for $n\geq1$ and $T_{(0)}$ is the operator on the subspace $\mathcal {H}_{(0)}$ corresponding to $[\mu_1,\infty)\cap \sigma(T)$.

Denote $$X=\oplus_{n\geq0}\oplus_{1\leq j\leq k(n)-1}\xi_{(n,j)}^{-1}T_{(n,j)},$$ where $\xi_{(0)}=\mu_1$, $\xi_{(n,j)}\in[\nu_{m^n_{j+1}},\nu_{m^n_{j}}]\cap\sigma(T)$ and $\xi_{(n,1)}=\mu_n$. Then since
\begin{eqnarray*}||\xi_{(n,j)}^{-1}T_{(n,j)}||\leq M ~{\rm and}~
||(\xi_{(n,j)}^{-1}T_{(n,j)})^{-1}||\leq M,~1\leq j\leq k(n)-1,~n\geq0,
\end{eqnarray*} we have $X$ is an invertible operator.
Moreover, $$S\triangleq\oplus_{n\geq0}\oplus_{1\leq j\leq k(n)-1}\xi_{(n,j)}I_{(n,j)}=X^{-1}T,$$ where $I_{(n,j)}$ is the identity operator on $\mathcal {H}_{(n,j)}$.
Obviously, $S$ is a diagonal operator with $\{\lambda_n\}$ its subsequence. Thus $S$ satisfies the conditions of lemma \ref{keylemma} and hence it is a conditional operator.
From  theorem \ref{BPS}, we obtain that $T$ is a conditional operator.
\end{proof}

Following is a easier criterion for an operator to be conditional.

\begin{theorem}\label{5}
Let $T\geq0$ belong to $\mathcal{L}(\mathcal {H})$ which does not admit the eigenvalue zero. If $\sigma(T)$ has a decreasing sequence $\{\lambda_n\}$ which converges to zero such that $$\lim\limits_{n\rightarrow\infty}\frac{\lambda_n}{\lambda_{n+1}}=1,$$ then $T$ is a conditional operator.
\end{theorem}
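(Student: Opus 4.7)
The plan is to reduce this to the previous Theorem \ref{4}. Since $T \geq 0$ has no zero eigenvalue, it suffices to verify that the hypothesis
$$\lim_{t \to 0^+} \mathrm{Card}\bigl([t/\delta, t] \cap \sigma(T)\bigr) = \infty$$
holds for some $\delta > 1$ (in fact, for every $\delta > 1$), and then invoke Theorem \ref{4}.

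Fix any $\delta > 1$ and an arbitrary integer $M \geq 1$. The key identity is the telescoping product
$$\frac{\lambda_n}{\lambda_{n+M}} = \prod_{j=0}^{M-1} \frac{\lambda_{n+j}}{\lambda_{n+j+1}},$$
whose right-hand side is a product of $M$ factors each tending to $1$ as $n \to \infty$. Consequently $\lambda_n / \lambda_{n+M} \to 1$ as $n \to \infty$, so there exists $N = N(\delta, M)$ with $\lambda_n / \lambda_{n+M} < \delta$ for every $n \geq N$.

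Next I would run a simple covering argument. Given any $t \in (0, \lambda_N]$, because $\{\lambda_n\}$ is strictly decreasing to zero, there is a unique index $n \geq N$ with $\lambda_{n+1} < t \leq \lambda_n$. For such $n$, each of the $M$ values $\lambda_{n+1}, \lambda_{n+2}, \ldots, \lambda_{n+M}$ sits inside $[t/\delta, t]$: monotonicity gives $\lambda_{n+j} \leq \lambda_{n+1} < t$ for $1 \leq j \leq M$, while the choice of $N$ yields $\lambda_{n+M} \geq \lambda_n/\delta \geq t/\delta$. Hence $\mathrm{Card}([t/\delta, t] \cap \sigma(T)) \geq M$ for all sufficiently small $t$. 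Since $M$ was arbitrary, the required limit is infinite, and Theorem \ref{4} finishes the proof.

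There is no real obstacle: the only content beyond invoking Theorem \ref{4} is the elementary fact that a ratio $\lambda_n/\lambda_{n+1} \to 1$ forces arbitrarily many consecutive terms to be packed within a factor of $\delta$ of each other. Everything reduces to the telescoping estimate and the observation that the $\lambda_n$ are automatically spaced closely enough that intervals of the form $[t/\delta, t]$ sweep across blocks of consecutive $\lambda_n$'s as $t$ decreases.
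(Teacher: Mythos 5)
Your proof is correct and follows essentially the same route as the paper: both reduce the statement to Theorem \ref{4} by verifying the cardinality hypothesis $\lim_{t\rightarrow 0^{+}}\mathrm{Card}\{[\frac{t}{\delta},t]\cap\sigma(T)\}=\infty$, and both rest on the observation that $\lambda_{n}/\lambda_{n+1}\rightarrow 1$ forces $\lambda_{n}/\lambda_{n+M}\rightarrow 1$ for each fixed $M$. The only difference is presentational: the paper argues by contradiction (extracting intervals $[a_{k},b_{k}]$ of ratio $\delta$ each containing fewer than $N$ of the $\lambda_{n}$), whereas your covering argument is direct and in fact shows the hypothesis holds for every $\delta>1$.
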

\begin{proof}
It suffices to show that there exists a constant $\delta>1$ such that $$\lim\limits_{t\rightarrow0^+}Card\{[\frac{t}{\delta},t]\cap\{\lambda_n,n\geq1\}\}=\infty.$$

If not, then there exists $N>0$, such that for any $t_0>0$, there is a $t\leq t_0$, $$Card\{[\frac{t}{\delta},t]\cap\{\lambda_n,n\geq1\}\}<N.$$ Thus there exist sequences $a_{k},b_{k}$ converge to zero, such that for all $k$
\begin{eqnarray*}&&\frac{b_{k}}{a_{k}}=\delta, Card\{[a_{k},b_{k}]\cap\{\lambda_n,n\geq1\}\}<N,\\
&&b_{k+1}<a_{k},Card\{[b_{k+1},a_{k}]\cap\{\lambda_n,n\geq1\}\}\geq1.\end{eqnarray*}

Choose $\lambda_{n_1}$ such that $\lambda_{n_1}=min\{\lambda_{n};~\lambda_{n}\geq b_1\}$, choose $\lambda_{n_2}$ such that $\lambda_{n_2}=max\{\lambda_{n};~\lambda_{n}\leq a_1\}$. Generally, choose $\lambda_{n_{2k-1}}=min\{\lambda_{n};~\lambda_{n}\geq b_{k}\}$ and $\lambda_{n_{2k}}=max\{\lambda_{n};~\lambda_{n}\leq a_{k}\}$. It is easy to see that $n_{2k}-n_{2k-1}\leq N$.

On the other hand, since $$\lim\limits_{n\rightarrow\infty}\frac{\lambda_n}{\lambda_{n+1}}=1,$$ we have $$\lim\limits_{n\rightarrow\infty}\frac{\lambda_{n}}{\lambda_{n+j}}=1,$$ for any $1\leq j\leq N$ and hence $$\lim\limits_{k\rightarrow\infty}\frac{\lambda_{n_{2k-1}}}{\lambda_{n_{2k}}}=1.$$ But $$\frac{\lambda_{n_{2k-1}}}{\lambda_{n_{2k}}}\geq\frac{b_{k}}{a_{k}}=\delta>1$$ for any $k$, it is a contradiction.

Thus $T$ is a conditional operator.
\end{proof}

\begin{remark}
Actually, suppose the limit of $\frac{\lambda_n}{\lambda_{n+1}}$ exists, then  $$\lim\limits_{n\rightarrow\infty}\frac{\lambda_n}{\lambda_{n+1}}=1$$ if and only if there exists a constant $\delta>1$ such that $$\lim\limits_{t\rightarrow0}Card\{[\frac{t}{\delta},t]\cap\{\lambda_n,n\geq1\}\}=\infty.$$ One can easily prove it. Thus the condition of theorem \ref{5} is a little stronger than theorem \ref{4}.
\end{remark}

\begin{corollary}
Let $T\in\mathcal{L}(\mathcal {H})$ such that $T$ and $T^*$ do not admit the eigenvalue zero. If $\sigma((T^*T)^{\frac{1}{2}})$ has a decreasing sequence $\lambda_n$ which converges to zero such that $$\limsup\limits_{n\rightarrow\infty}\frac{\lambda_n}{\lambda_{n+1}}=1,$$ then $T$ is a conditional operator.
\end{corollary}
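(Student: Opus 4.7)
The plan is to reduce to the positive self-adjoint case via polar decomposition and then invoke Theorem \ref{5}. Write $T = U|T|$ with $|T| = (T^{*}T)^{1/2}$, where $U$ is the partial isometry in the polar decomposition. The hypothesis that neither $T$ nor $T^{*}$ has $0$ as an eigenvalue means $T$ is injective and $\overline{\mathrm{Ran}\,T} = \mathcal{H}$. Then $|T|$ is injective (since $\ker|T| = \ker T$), so the initial space of $U$ is $\overline{\mathrm{Ran}\,|T|} = \mathcal{H}$; and the final space of $U$ contains $\overline{\mathrm{Ran}\,T} = \mathcal{H}$. Hence $U$ is in fact a unitary operator.

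Next, I would argue that $T$ is a conditional operator if and only if $|T|$ is. Suppose $|T|$ is conditional and pick an ONB $\{e_{n}\}_{n=1}^{\infty}$ such that the matrix $F$ of $|T|$ under $\{e_{n}\}$ is a conditional Schauder matrix. Under this same ONB, the matrix of $T$ is $UF$, and since $U$ is unitary and hence invertible, property 1 of Theorem \ref{BPS} yields that $UF$ is a conditional Schauder matrix too. Therefore $T$ maps $\{e_{n}\}$ to a conditional basis and is a conditional operator in the sense of Definition \ref{CUOP}.

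It remains to apply Theorem \ref{5} to the positive operator $|T|$, whose spectrum is $\sigma((T^{*}T)^{1/2})$ and which does not admit $0$ as an eigenvalue. The only point that requires care is that Theorem \ref{5} is stated with $\lim_{n \to \infty} \lambda_{n}/\lambda_{n+1} = 1$, whereas the hypothesis here is only that $\limsup_{n \to \infty} \lambda_{n}/\lambda_{n+1} = 1$. However, because $\{\lambda_{n}\}$ is strictly decreasing and positive, $\lambda_{n}/\lambda_{n+1} \geq 1$ for all $n$. Combined with the limsup equal to $1$, for every $\epsilon > 0$ only finitely many indices $n$ satisfy $\lambda_{n}/\lambda_{n+1} \geq 1 + \epsilon$. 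This forces $\liminf \lambda_{n}/\lambda_{n+1} = \limsup \lambda_{n}/\lambda_{n+1} = 1$, so the limit exists and equals $1$.

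With this observation the hypotheses of Theorem \ref{5} are satisfied by $|T|$, so $|T|$ is a conditional operator, and the reduction in the second paragraph shows the same for $T$. The main conceptual obstacle is really just the polar-decomposition step: one must verify that the combined injectivity and dense-range assumptions upgrade the partial isometry in the polar decomposition to a genuine unitary, which is essential for invoking property 1 of Theorem \ref{BPS} to transfer conditionality from $|T|$ back to $T$; everything else is immediate.
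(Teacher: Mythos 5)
Your proof is correct and takes essentially the same route as the paper: polar decomposition $T=U(T^*T)^{1/2}$ with $U$ unitary, Theorem \ref{5} applied to the positive part, and property 1 of Theorem \ref{BPS} to transfer conditionality back to $T$. In fact you supply two details the paper's two-line proof leaves implicit, namely that the injectivity of $T$ and $T^*$ is exactly what upgrades the partial isometry to a unitary, and that for a decreasing sequence the $\limsup$ hypothesis forces $\lim_{n\rightarrow\infty}\lambda_n/\lambda_{n+1}=1$ as required by Theorem \ref{5}.
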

\begin{proof}
From the polar decomposition theorem, $$T=U(T^*T)^{\frac{1}{2}},$$ where $U$ is a unitary operator. Thus from theorem \ref{5} and theorem \ref{BPS}, we obtain the result.
\end{proof}

\begin{corollary}
Compact operator $K=diag\{1,\frac{1}{2},\frac{1}{3},\cdots\}$ is a conditional operator.
\end{corollary}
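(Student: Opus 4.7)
The plan is to verify the hypotheses of Theorem \ref{5} for $K=\mathrm{diag}\{1,\tfrac{1}{2},\tfrac{1}{3},\ldots\}$ and then simply invoke that theorem; no new machinery is required.

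First I would observe that $K$ is self-adjoint and positive, since each diagonal entry $\tfrac{1}{n}$ is a positive real number, so $K\geq 0$. Next I would check that $0$ is not an eigenvalue of $K$: if $Kx=0$ for some $x=\sum_{n}x_n e_n\in\mathcal{H}$, then the diagonal action gives $\tfrac{1}{n}x_n=0$ for every $n$, hence $x=0$. (Note that $0$ does lie in $\sigma(K)$ as an accumulation point of the point spectrum, which is consistent with $K$ being compact and non-invertible; the hypothesis of Theorem \ref{5} only forbids $0$ from being an \emph{eigenvalue}.)

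Second, I would exhibit the required decreasing sequence $\{\lambda_n\}\subseteq\sigma(K)$ converging to zero. The spectrum of $K$ is $\{0\}\cup\{1/n:n\in\mathbb{N}\}$, and the natural choice is $\lambda_n=1/n$. This sequence is strictly decreasing, converges to $0$, and
$$
\frac{\lambda_n}{\lambda_{n+1}}=\frac{1/n}{1/(n+1)}=\frac{n+1}{n}\longrightarrow 1
$$
as $n\to\infty$. Therefore all three hypotheses of Theorem \ref{5} are met.

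Applying Theorem \ref{5} directly then yields the conclusion that $K$ is a conditional operator. There is no real obstacle here; the entire content of the corollary is that the diagonal spectrum of the canonical compact diagonal operator $\mathrm{diag}\{1/n\}$ clusters at $0$ with consecutive ratios tending to $1$, which is precisely the regime Theorem \ref{5} was designed to handle. The serious work was already carried out in Lemma \ref{keylemma} and Theorems \ref{4} and \ref{5}.
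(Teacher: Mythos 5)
Your proposal is correct and matches the paper's intent exactly: the paper states this corollary without proof as an immediate application of Theorem \ref{5}, and your verification that $\lambda_n=\tfrac{1}{n}$ is a decreasing sequence in $\sigma(K)$ converging to $0$ with $\lambda_n/\lambda_{n+1}=\tfrac{n+1}{n}\to 1$, together with the observation that $0$ is not an eigenvalue of $K$, is precisely the intended argument.
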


\end{document}